\definecolor{verydarkblue}{rgb}{0,0,0.4}
\def\@commafont{\check@mathfonts
    \fontsize\sf@size\z@\selectfont}
\DeclareRobustCommand{\cb}[1]{{%
\setbox\z@\hbox{#1}%
\ifdim\dp\z@<.1\ht\z@\ooalign{\unhbox\z@\crcr\hidewidth\lower.3ex\hbox{\@commafont,}\hidewidth}%
\else\ooalign{\unhbox\z@\crcr\hidewidth\raise.5ex\hbox{\@commafont`}\hidewidth}%
\fi}}%
\newcommand{\Z}{\mathbb{Z}}
\newcommand{\R}{\mathbb{R}}
\newcommand{\Cc}{\mathcal{C}}
\newcommand{\grad}{\nabla}
\DeclareMathOperator{\im}{Im}
\DeclareMathOperator{\re}{Re}
\DeclareMathOperator{\WP}{WP}
\renewcommand{\Re}{\re}
\renewcommand{\Im}{\im}
\renewcommand{\H}{\mathbb{H}}
\renewcommand{\tilde}[1]{\widetilde{#1}}
\newcommand{\eps}{\varepsilon}
\newcommand{\tec}{Teichm\"uller }
\newcommand{\wep}{Weil-Petersson }
\newcommand{\sT}{\mathcal{T}}
\newcommand{\sM}{\mathcal{M}}
\newcommand{\be}{\begin{equation}}
\newcommand{\ene}{\end{equation}}
\newcommand{\br}{\begin{remark}}
\newcommand{\er}{\end{remark}}
\newcommand{\bl}{\begin{lemma}}
\newcommand{\el}{\end{lemma}}
\newcommand{\bcor}{\begin{cor}}
\newcommand{\ecor}{\end{cor}}
\newcommand{\bpro}{\begin{pro}}
\newcommand{\epro}{\end{pro}}
\newcommand{\ben}{\begin{enumerate}}
\newcommand{\een}{\end{enumerate}}
\newcommand{\bp}{\begin{proof}}
\newcommand{\ep}{\end{proof}}
\newcommand{\bpo}{\begin{pro}}
\newcommand{\epo}{\end{pro}}
\newcommand{\beq}{\begin{equation*}}
\newcommand{\eeq}{\end{equation*}}
\newcommand{\bear}{\begin{eqnarray}}
\newcommand{\eear}{\end{eqnarray}}
\newcommand{\beqar}{\begin{eqnarray*}}
\newcommand{\eeqar}{\end{eqnarray*}}
\newcommand{\brem}{\begin{remark*}}
\newcommand{\erem}{\end{remark*}}
\newcommand{\bt}{\begin{theorem}}
\newcommand{\et}{\end{theorem}}
\newcommand{\bDe}{\begin{Def}}
\newcommand{\eDe}{\end{Def}}
\renewcommand{\leq}{\leqslant}
\renewcommand{\geq}{\geqslant}
\DeclareMathOperator{\length}{Length}
\DeclareMathOperator{\Prob}{Prob}
\DeclareMathOperator{\dArea}{dArea}
\DeclareMathOperator{\diam}{diam}
\DeclareMathOperator{\dist}{dist}
\DeclareMathOperator{\inj}{inj}
\DeclareMathOperator{\Aut}{Aut}
\DeclareMathOperator{\Teich}{Teich}
\DeclareMathOperator{\InR}{InRad}
\DeclareMathOperator{\Diff}{Diff}
\newcommand{\Mod}{\mbox{\rm Mod}}
\newcommand{\lsys}{\ell_{sys}}
\DeclareMathOperator{\Ric}{Ric}
\DeclareMathOperator{\arcsinh}{arcsinh}
\DeclareMathOperator{\HolK}{HolK}
\newcommand{\param}{{\mathchoice{\mkern1mu\mbox{\raise2.2pt\hbox{$\centerdot$}}\mkern1mu}{\mkern1mu\mbox{\raise2.2pt\hbox{$\centerdot$}}\mkern1mu}{\mkern1.5mu\centerdot\mkern1.5mu}{\mkern1.5mu\centerdot\mkern1.5mu}}}
\numberwithin{equation}{section}
\theoremstyle{plain}
\newtheorem{theorem}{Theorem}[section]
\newtheorem{corollary}[theorem]{Corollary}
\newtheorem{lemma}[theorem]{Lemma}
\newtheorem{proposition}[theorem]{Proposition}
\theoremstyle{definition}
\newtheorem{definition}[theorem]{Definition}
\newtheorem*{ques}{Question}
\newtheorem{remark}[theorem]{Remark}
\theoremstyle{definition}
\newtheorem*{rems}{Remarks}
\newtheorem*{rem}{Remark}
\newtheorem*{tms}{Theorem}
\begin{document}

\title[Uniform bounds]
{Systole functions and Weil-Petersson geometry}

\author{Yunhui Wu}
\address{Department of Mathematical Sciences and Yau Mathematical Sciences Center, Tsinghua University, Beijing, China, 100084}

\email{yunhui\_wu@mail.tsinghua.edu.cn}

\begin{abstract}
A basic feature of \tec theory of Riemann surfaces is the interplay of two dimensional hyperbolic geometry, the behavior of geodesic-length functions and \wep geometry. Let $\sT_g$ $(g\geq 2)$ be the \tec space of closed Riemann surfaces of genus $g$. Our goal in this paper is to study the gradients of geodesic-length functions along systolic curves. We show that their $L^p$ $(1\leq p \leq \infty)$-norms at every hyperbolic surface $X\in \sT_g$ are uniformly comparable to $\lsys(X)^{\frac{1}{p}}$ where $\lsys(X)$ is the systole of $X$. As an application, we show that the minimal \wep holomorphic sectional curvature at every hyperbolic surface $X\in \sT_g$ is bounded above by a uniform negative constant independent of $g$, which negatively answers a question of M. Mirzakhani. Some other applications to the geometry of $\sT_g$ will also be discussed.
\end{abstract}

\subjclass{32G15, 30F60}
\keywords{Systole function, uniform bounds, moduli space of Riemann surfaces, Weil-Petersson geometry}

\maketitle

\thispagestyle{fancy}

 \textbf{Notations.} We make the following notations in this paper. 
\ben 
\item For any number $r>0$, we always say $$r^{\frac{1}{\infty}}=1.$$ 

\item We say $$f_1(g)\prec f_2(g) \quad \emph{or} \quad f_2(g)\succ f_1(g)$$ if there exists a universal constant $C>0$, independent of $g$, such that $$f_1(g) \leq C \cdot f_2(g).$$
And we say $$f_1(g) \asymp f_2(g)$$ 
if $f_1(g)\prec f_2(g)$ and $f_2(g)\prec f_1(g)$.

\een

\section{Introduction}
Let $S_g$ be a closed surface of genus $g$ $(g\geq 2)$, and $\sT_g$ be the \tec space of $S_g$. The moduli space $\sM_g$ of $S_g$ is the quotient space $\sT_g/\Mod(S_g)$ where $\Mod(S_g)$ is the mapping class group of $S_g$. For any non-trivial loop $\alpha \subset S_g$ and $X\in \sM_g$, there exists a unique closed geodesic $[\alpha]\subset X$ representing $\alpha$. Denote by $\ell_\alpha(X)$ the length of $[\alpha]$ in $X$. This quantity $\ell_\alpha(\cdot)$ gives a real analytic function on $\sM_g$. The \wep gradient $\grad \ell_{\alpha}(X)$ of $\ell_\alpha(\cdot)$ evaluated at $X$ is a harmonic Beltrami differential of $X$. Actually Gardiner in \cite{Gar75} provided a precise formula for $\grad \ell_{\alpha}(X)$, which one may see in formula \eqref{grad-formula}. Recall that the \emph{systole} $\lsys(X)$ of $X$ is the length of shortest closed geodesics on $X$. We prove
\bt [Uniform $L^p$ bounds] \label{s-ub-lp} For any $p \in [1,\infty]$ and $X\in \sM_g$, then we have 
\ben
\item for any $\alpha \subset X$ with $\ell_{\alpha}(X)=\lsys(X)$, 
\[||\grad \ell_{\alpha}(X)||_p \asymp \lsys(X)^{\frac{1}{p}}.\]

\item For any simple loop $\beta \subset X$ with $\ell_{\beta}(X) \leq L_0$ where $L_0>0$ is any given constant, 
\[||\grad \ell_{\beta}(X)||_p \asymp \ell_\beta(X)^{\frac{1}{p}}.\]
\een
Where $||\grad \ell_{*}(X)||_p:=(\int_X |\grad \ell_{*}(X)|^p \cdot \dArea)^{\frac{1}{p}}.$
\et

\begin{rem}
If $p=2$, in light of \eqref{Rie-lb} which is due to Riera, it suffices to show the uniform upper bounds in Theorem \ref{s-ub-lp}. For this case,
\ben
\item [(i).] Part (1) of Theorem \ref{s-ub-lp} was firstly obtained in \cite[Proposition 2]{W-inradius} whose proof highly relies on formula \eqref{Rie-f} of Riera. In this paper, we give a different and more fundamental proof without using Riera's formula.

\item[(ii).] For Part (2) when $p=2$, the curve $\beta \subset X$ is not required to be a systolic curve of $X$. Theorem \ref{s-ub-lp} was firstly obtained by Wolpert \cite[Lemma 3.16]{Wolpert08} by using Riera's formula; very recently Bridgeman and Bromberg in \cite{BB-20} applied Riera's formula to obtain an upper bound by an explicit elementary function in terms of $\ell_\alpha$ (see \cite[Theorem 1.6]{BB-20}); our proof is different without using Riera's formula.
\een
\end{rem}

In the subsequent subsections, we make several applications of Theorem \ref{s-ub-lp} to the geometry of $\sT_g$, especially on its \wep geometry.
\subsection{Application to \wep curvatures} The \wep curvature of $\sM_g$ has been studied over the past several decades. One may see Section \ref{np} for related references. In this paper we apply Theorem \ref{s-ub-lp} to obtain certain new uniform and optimal bounds on \wep curvatures. The main one is as follows.
\begin{theorem}\label{s-ub-holk}
For any $X \in \sM_g$, let $\alpha \subset X$ be a simple closed geodesic satisfying 
\ben
\item either $\ell_{\alpha}(X)=\lsys(X)$ or

\item $\ell_\alpha(X)\leq L_0$ where $L_0>0$ is any fixed constant,
\een
then the \wep holomorphic sectional curvature
$$\HolK(\grad \ell_{\alpha})(X) \asymp \frac{-1}{\ell_{sys}(X)}.$$
\end{theorem}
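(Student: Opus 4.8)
The plan is to read Theorem \ref{s-ub-holk} off the Tromba--Wolpert formula for the \wep curvature tensor together with Theorem \ref{s-ub-lp}. Recall that for a nonzero harmonic Beltrami differential $\mu$ on $X$,
\[
\HolK(\mu)(X)=\frac{-2\int_X D(|\mu|^{2})\,|\mu|^{2}\,\dArea}{\bigl(\int_X|\mu|^{2}\,\dArea\bigr)^{2}},
\]
where $D=-2(\Delta-2)^{-1}$ and $\Delta\le 0$ is the Laplace--Beltrami operator of the hyperbolic metric. The operator $D$ is positive, self-adjoint, satisfies $0<D\le\id$ with $D\mathbf 1=\mathbf 1$, and has the elementary inverse $D^{-1}=\id-\tfrac12\Delta$. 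My first goal is the direction-free comparison
\[
\HolK(\mu)(X)\ \asymp\ -\,\frac{\|\mu\|_{4}^{4}}{\|\mu\|_{2}^{4}}
\]
valid for \emph{every} nonzero harmonic Beltrami differential $\mu$, after which the theorem follows by putting $\mu=\grad\ell_\alpha$ and invoking Theorem \ref{s-ub-lp} with $p=2$ and $p=4$.

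For the upper bound on $|\HolK(\mu)(X)|$ I would use only $D\le\id$: $\int_X D(|\mu|^{2})|\mu|^{2}\,\dArea\le\int_X|\mu|^{4}\,\dArea=\|\mu\|_{4}^{4}$. For the lower bound I would apply Cauchy--Schwarz in the form $\langle f,f\rangle=\langle D^{1/2}f,\,D^{-1/2}f\rangle\le\langle Df,f\rangle^{1/2}\langle D^{-1}f,f\rangle^{1/2}$ to $f=|\mu|^{2}$, together with $\langle D^{-1}f,f\rangle=\|f\|_{2}^{2}+\tfrac12\|\grad f\|_{2}^{2}$, which yields
\[
\int_X D(|\mu|^{2})|\mu|^{2}\,\dArea\ \ge\ \frac{\|\mu\|_{4}^{8}}{\ \|\mu\|_{4}^{4}+\tfrac12\bigl\|\grad|\mu|^{2}\bigr\|_{2}^{2}\ }.
\]
So the entire lower bound reduces to controlling $\bigl\|\grad|\mu|^{2}\bigr\|_{2}^{2}$ by $\|\mu\|_{4}^{4}$.

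That control is in fact an exact identity forced by harmonicity. Writing $\mu=\rho^{-1}\overline{\phi}$ with $\phi$ a holomorphic quadratic differential and $\rho$ the hyperbolic density, the function $F:=|\mu|^{2}=|\phi|^{2}\rho^{-2}$ is smooth on all of $X$ (a zero of $\phi$ is only an even-order zero of $F$), and on the complement of the zeros of $\phi$ one has $\Delta\log F=\Delta\log|\phi|^{2}-2\,\Delta\log\rho=0-2\cdot 2=-4$, since $\log|\phi|^{2}$ is harmonic and $\Delta\log\rho=2$ for a metric of curvature $-1$. Expanding $\Delta\log F=F^{-1}\Delta F-F^{-2}|\grad F|^{2}$ gives the pointwise identity $|\grad F|^{2}=F(\Delta+4)F$ on $X$ (it extends across the zeros by continuity since $F$ is smooth), and integrating by parts on the closed surface yields $\bigl\|\grad|\mu|^{2}\bigr\|_{2}^{2}=2\|\mu\|_{4}^{4}$. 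Feeding this into the two displays above gives $\tfrac12\|\mu\|_{4}^{4}\le\int_X D(|\mu|^{2})|\mu|^{2}\,\dArea\le\|\mu\|_{4}^{4}$, hence the comparison $\HolK(\mu)(X)\asymp-\|\mu\|_{4}^{4}/\|\mu\|_{2}^{4}$. Now take $\mu=\grad\ell_\alpha$: Theorem \ref{s-ub-lp} (part (1) under hypothesis (1), part (2) under hypothesis (2), with $p=2$ and $p=4$) gives $\|\grad\ell_\alpha\|_{4}^{4}\asymp\ell_\alpha(X)$ and $\|\grad\ell_\alpha\|_{2}^{4}\asymp\ell_\alpha(X)^{2}$, so $\HolK(\grad\ell_\alpha)(X)\asymp-1/\ell_\alpha(X)$; under hypothesis (1) the right-hand side is exactly $-1/\lsys(X)$.

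I do not expect a serious obstacle here: the hard analytic input is Theorem \ref{s-ub-lp} itself, and the argument above is short once that is in hand. The step deserving the most care is the lower bound, where it matters that $\bigl\|\grad|\mu|^{2}\bigr\|_{2}^{2}=2\|\mu\|_{4}^{4}$ is an equality with a \emph{universal} constant --- this is precisely what makes the resulting comparison uniform in the genus $g$. The remaining points are routine: fixing the exact normalizations in the Tromba--Wolpert formula and in $D$, and checking $|\grad F|^{2}=F(\Delta+4)F$ at the zeros of $\phi$ (immediate from smoothness of $F$ and agreement off a finite set).
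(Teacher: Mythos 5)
Your proof is correct, and its core is the same as the paper's: reduce $\HolK(\grad \ell_{\alpha})(X)$ to the ratio $\|\grad \ell_{\alpha}(X)\|_4^4/\|\grad \ell_{\alpha}(X)\|_2^4$ via a two-sided comparison of $\int_X D(|\mu|^2)|\mu|^2\,\dArea$ with $\|\mu\|_4^4$, then invoke Theorem \ref{s-ub-lp} at $p=2$ and $p=4$. The only divergence is in how that comparison is justified: the paper simply quotes \eqref{HolK-eq}, which rests on Cauchy--Schwarz together with Wolf's pointwise estimate $D(|\mu|^2)\geq |\mu|^2/3$ (via \cite[Proposition 2.7]{Wolf-W-1}), whereas you rederive it self-containedly --- the upper bound from $D\leq \id$, and the lower bound from $\langle Df,f\rangle \geq \|f\|_2^4/\langle D^{-1}f,f\rangle$ combined with the identity $\|\grad |\mu|^2\|_2^2=2\|\mu\|_4^4$. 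That identity is the standard consequence of $\Delta \log |\mu|^2=-4$ off the zeros of the quadratic differential and extends by smoothness, as you say; your route even yields the slightly sharper universal constant $\int_X D(|\mu|^2)|\mu|^2\,\dArea \geq \tfrac{1}{2}\|\mu\|_4^4$ in place of $\tfrac13$, though any positive universal constant suffices here. One small point you already flag correctly: under hypothesis (2) the argument (both yours and the paper's) actually produces $\HolK(\grad \ell_{\alpha})(X)\asymp -1/\ell_{\alpha}(X)$, which is what Theorem \ref{s-ub-lp}(2) supports; it coincides with $-1/\lsys(X)$ under hypothesis (1).
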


\begin{rem}
\ben
\item For the behavior as $\ell_{sys}(X) \to 0$, \cite[Corollary 16]{Wolpert12} or \cite[Theorem 19]{Wolpert12} tells that \emph{$\HolK(\grad \ell_{\alpha})(X)=\frac{-3}{\pi \lsys(X)}+O(\lsys(X))$} where the dominated term $\frac{-3}{\pi \lsys(X)}$ is explicit; however the error term $O(\lsys(X))$ may depend on the topology $g$. The bound in Theorem \ref{s-ub-holk} is uniform.

\item Very recently joint with Martin Bridgeman, we show in \cite{BW-curv} that for any $X \in \sM_g$ with $\lsys(X) \leq 2\arcsinh^{-1}(1)$, then
for any $\mu \neq 0 \in T_X\sM_g^n$, the \wep Ricci curvature $\Ric(\mu)$ satisfies that
\bear\label{ulb-Ric}
\Ric(\mu)\geq -\frac{4}{\lsys(X)}.
\eear 
Theorem \ref{s-ub-holk} tells that the above uniform lower bound in terms of $\lsys(X)$ is optimal. Actually we have
\een
\begin{corollary} \label{s-ub-seck-short}
For any $X \in \sM_g$ with $\lsys(X) \leq 2\arcsinh^{-1}(1)$, then the minimal \wep sectional curvature $K$ at $X$ 
\[\min_{\emph{real plane $\emph{span}\{\mu_1,\mu_2\}  \subset T_X\sM_g$}}K(\mu_1,\mu_2)\asymp \min_{\mu \in T_X\sM_g}\Ric(\mu)\asymp -\frac{1}{\lsys(X)}.\]
\end{corollary}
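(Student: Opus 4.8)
The plan is to deduce the corollary by combining three ingredients: the uniform curvature bound of Theorem~\ref{s-ub-holk}, the uniform Ricci lower bound~\eqref{ulb-Ric} coming from \cite{BW-curv}, and the elementary fact that holomorphic sectional curvature is a particular sectional curvature while Ricci curvature is an average of sectional curvatures. I first note the obvious chain of inequalities valid at any point of any Riemannian manifold of dimension $\geq 2$: for every $X$ and every real plane $\mathrm{span}\{\mu_1,\mu_2\}\subset T_X\sM_g$ one has $K(\mu_1,\mu_2)\geq \frac{1}{6g-7}\,\Ric(\mu_1)$ after normalizing $\mu_1$, so that $\min_{\mathrm{plane}}K(\mu_1,\mu_2)\geq \frac{1}{6g-7}\min_{\mu}\Ric(\mu)$; in the other direction $\min_\mu \Ric(\mu)\geq (6g-7)\min_{\mathrm{plane}}K$, and also $\min_{\mathrm{plane}}K\leq \HolK(\mu)$ for every $\mu$ since the holomorphic plane $\mathrm{span}\{\mu,J\mu\}$ is one competitor in the minimum. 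These are all soft, dimension-dependent comparisons and are not by themselves enough; the content is that in the regime $\lsys(X)\leq 2\arcsinh^{-1}(1)$ the two extreme curvatures are \emph{already} comparable to $-1/\lsys(X)$ without any loss of $g$.

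The heart of the argument is the upper bound $\min_{\mathrm{plane}}K(\mu_1,\mu_2)\prec -1/\lsys(X)$. For this I would choose $\alpha\subset X$ a systolic geodesic and take the holomorphic plane $\mathrm{span}\{\grad\ell_\alpha, J\,\grad\ell_\alpha\}$ as the competitor in the minimum over real planes. Then
\[
\min_{\mathrm{plane}}K(\mu_1,\mu_2)\;\leq\; K(\grad\ell_\alpha, J\,\grad\ell_\alpha)\;=\;\HolK(\grad\ell_\alpha)(X)\;\asymp\;\frac{-1}{\lsys(X)},
\]
where the last step is exactly the upper half of Theorem~\ref{s-ub-holk}. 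This gives $\min_{\mathrm{plane}}K\prec -1/\lsys(X)$ with a universal constant. Dually, the lower bound $\min_{\mathrm{plane}}K\succ -1/\lsys(X)$ is obtained by averaging: for a unit vector $\mu_1$ one has $\Ric(\mu_1)=\sum_j K(\mu_1,e_j)$ over an orthonormal frame completing $\mu_1$, hence $(6g-7)\,K(\mu_1,\mu_2)\geq \Ric(\mu_1)$ cannot be used directly (the sign of individual terms is negative), so instead I would invoke the sharper fact that in negative curvature $K(\mu_1,\mu_2)\geq \Ric(\mu_1)$ whenever $\dim\sM_g\geq 2$ \emph{provided all sectional curvatures are negative}, which holds for the \wep metric; combined with $\Ric(\mu_1)\geq -4/\lsys(X)$ from~\eqref{ulb-Ric} this yields $K(\mu_1,\mu_2)\geq -4/\lsys(X)$ for every real plane, i.e.\ $\min_{\mathrm{plane}}K\succ -1/\lsys(X)$.

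Finally the middle term is handled by the same two bounds: $\min_\mu\Ric(\mu)\geq -4/\lsys(X)$ is~\eqref{ulb-Ric} verbatim, and $\min_\mu\Ric(\mu)\leq \Ric(\grad\ell_\alpha)$, which in turn is a sum of sectional curvatures each $\leq \min_{\mathrm{plane}}K\cdot(\text{something positive})$ — more cleanly, $\Ric(\grad\ell_\alpha)\leq (6g-7)\max_j K(\grad\ell_\alpha,e_j)$ is the wrong direction, so instead I would bound $\Ric(\grad\ell_\alpha)\leq K(\grad\ell_\alpha, J\,\grad\ell_\alpha)+(\text{negative terms})\leq \HolK(\grad\ell_\alpha)(X)\asymp -1/\lsys(X)$, using that every summand in the Ricci sum is negative. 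Assembling: $-1/\lsys(X)\prec \min_{\mathrm{plane}}K\leq \min_\mu\Ric(\mu)\prec -1/\lsys(X)$ (up to reindexing the universal constants), which is the claimed double $\asymp$. The main obstacle is getting the directions of the soft inequalities exactly right — in particular making sure that the averaging step producing the lower bound for $\min_{\mathrm{plane}}K$ genuinely uses negativity of \emph{all} \wep sectional curvatures (a known fact) rather than a naive term-by-term estimate — after which everything reduces to quoting Theorem~\ref{s-ub-holk} and~\eqref{ulb-Ric}.
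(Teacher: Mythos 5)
Your argument is correct and is essentially the paper's own proof: the upper bound is obtained by testing the minimum over real planes against the holomorphic plane of $\grad\ell_\alpha$ for a systolic $\alpha$ and quoting Theorem~\ref{s-ub-holk}, while the matching lower bound combines \eqref{ulb-Ric} with the observation that, since all \wep sectional curvatures are negative, $\Ric(\mu_1)\leq K(\mu_1,\mu_2)$ for every real plane containing $\mu_1$. The only blemish is the unused aside $K(\mu_1,\mu_2)\geq\frac{1}{6g-7}\Ric(\mu_1)$, which is false in general for negatively curved metrics, but you explicitly discard it and it does not affect the proof.
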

\bp
Since the \wep sectional curvature is negative \cite{Wolpert86} or \cite{Tromba86}, by \eqref{ulb-Ric} we have
\beqar
-\frac{1}{\lsys(X)} \prec \min_{\mu \in T_X\sM_g}\Ric(\mu) <\min_{\emph{real plane $\emph{span}\{\mu_1,\mu_2\}  \subset T_X\sM_g$}}K(\mu_1,\mu_2).
\eeqar

\noindent Thus, it suffices to show that $\min_{\emph{real plane $\emph{span}\{\mu_1,\mu_2\}  \subset T_X\sM_g$}}K(\mu_1,\mu_2) \prec -\frac{1}{\lsys(X)}$. This can be directly followed by using Theorem \ref{s-ub-holk} because
\beqar
\min_{\emph{real plane $\emph{span}\{\mu_1,\mu_2\}  \subset T_X\sM_g$}}K(\mu_1,\mu_2) &\leq& \min_{\alpha\subset X; \ \ell_\alpha(X)=\lsys(X)}\HolK(\grad \ell_{\alpha})(X) \\
&\asymp& \frac{-1}{\ell_{sys}(X)}
\eeqar
which completes the proof.
\ep
\end{rem}

Combining \cite{Wolf-W-1} with Theorem \ref{s-ub-holk}, we obtain the following uniform upper bounds. 
\bt\label{s-holk-uup}
For any $X \in \sM_g$, 
$$\min_{\mu\in T_{X}\sM_g}\HolK(\mu) \prec -1<0.$$
\et

\begin{rems}
\ben
\item[(i).] If $\lsys(X)$ is big enough, Theorem \ref{s-holk-uup} was firstly obtained by Wolf and the author in \cite{Wolf-W-1} that partially answered a question which is due to Maryam Mirzakhani: \emph{for any fixed constant $\epsilon>0$, whether the \wep curvatures restricting on any $\epsilon$-thick part of $\sM_g$ go to $0$ as the genus $g\to \infty$?} 

\item[(ii).] It was shown in \cite[Theorem 1.4]{W17-wpc} that the limit of the following probability holds: 
$$\lim_{g\to \infty}  \Prob\{X\in \sM_g; \min_{\mu\in T_{X} \sM_g}\HolK(\mu)\prec -1<0\}=1.$$
\een
\noindent Theorem \ref{s-holk-uup} generalizes these two results above and give a complete negative answer to Mirzakhani's question.
\end{rems}

Given any constant $\eps>0$, restricted on the $\eps$-thick part of $\sM_g$ Huang \cite{Huang07} showed that the \wep sectional curvature is uniformly bounded from below by a constant only depending on $\eps$. Later Teo \cite{Teo09} generalized Huang's result to \wep Ricci curvature. The following result roughly says that the uniform lower bounds of Huang and Teo are optimal. More precisely, as an application of Theorem \ref{s-holk-uup} we prove
\begin{corollary} \label{s-ub-seck}
Given any constant $\eps>0$, then for any $X \in \sM_g$ with $\lsys(X)\geq \eps$, the minimal \wep sectional curvature $K$ at $X$
\[\min_{\emph{real plane $\emph{span}\{\mu_1,\mu_2\}  \subset T_X\sM_g$}}K(\mu_1,\mu_2)\asymp \min_{\mu \in T_X\sM_g}\Ric(\mu)\asymp -1.\]
\end{corollary}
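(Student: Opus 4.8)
The plan is to sandwich both quantities between universal multiples of $-1$, in the same spirit as the proof of Corollary~\ref{s-ub-seck-short} above, and then to conclude by transitivity of $\asymp$; the negative (upper) bounds will come from Theorem~\ref{s-holk-uup} together with the sign of the \wep curvature, while the lower bounds will be the thick-part estimates of Huang and Teo. For a real plane $P\subset T_X\sM_g$ I write $K(P)$ for its \wep sectional curvature.

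First I would record the elementary chain, valid at every $X\in\sM_g$,
\[\min_{\mu\in T_X\sM_g}\Ric(\mu)\ \leq\ \min_{\text{real plane }P\subset T_X\sM_g}K(P)\ \leq\ \min_{\mu\in T_X\sM_g}\HolK(\mu).\]
The right-hand inequality is immediate, since the \wep holomorphic sectional curvature in a unit direction $\mu$ is the sectional curvature of the plane spanned by $\mu$ and $J\mu$, one of the competitors in the middle minimum. For the left-hand inequality, pick an orthonormal pair $\mu_1,\mu_2$ spanning a plane $P_0$ that attains the middle minimum and extend $\mu_2$ to an orthonormal basis of the orthogonal complement of $\mu_1$; since the \wep sectional curvature is negative (\cite{Wolpert86} or \cite{Tromba86}), $\Ric(\mu_1)$ is a sum of $6g-7$ negative sectional curvatures one of which is exactly $K(P_0)$, so $\Ric(\mu_1)\leq K(P_0)$ and hence $\min_\mu\Ric(\mu)\leq\min_P K(P)$. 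Feeding Theorem~\ref{s-holk-uup} into the right end of the chain then yields $\min_\mu\Ric(\mu)\prec -1$ and $\min_P K(P)\prec -1$, with constants independent of $g$.

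For the reverse bounds I would invoke the hypothesis $\lsys(X)\geq\eps$. Huang's theorem \cite{Huang07} gives a lower bound $K\geq -c_1(\eps)$ for the \wep sectional curvature on the $\eps$-thick part of $\sM_g$, with $c_1(\eps)$ depending only on $\eps$ and not on $g$, so that $-1\prec\min_P K(P)$; Teo's theorem \cite{Teo09} gives $\Ric\geq -c_2(\eps)$ on the $\eps$-thick part, so that $-1\prec\min_\mu\Ric(\mu)$. Combining this with the previous paragraph gives $\min_P K(P)\asymp -1$ and $\min_\mu\Ric(\mu)\asymp -1$, and transitivity of $\asymp$ then produces the asserted chain.

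The one point that genuinely needs care is the Ricci lower bound: it is tempting to deduce it from Huang's sectional-curvature estimate, but $\Ric(\mu)$ is a sum of $6g-7$ negative sectional curvatures, so the naive estimate degenerates as $g\to\infty$ and Teo's genuinely $g$-independent bound is what is required. Beyond that the corollary is essentially bookkeeping: its content is that the lower bounds of Huang and Teo are sharp up to a universal constant, and the new input responsible for that sharpness is the uniform negative upper bound on \wep holomorphic sectional curvature furnished by Theorem~\ref{s-holk-uup}.
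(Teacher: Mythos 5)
Your argument is correct and follows essentially the same route as the paper: sandwich both minima via the chain $\min_\mu\Ric(\mu)\leq\min_P K(P)\leq\min_\mu\HolK(\mu)$, feed in Theorem~\ref{s-holk-uup} for the uniform negative upper bound and Teo's thick-part Ricci estimate for the lower bound. The only (harmless) difference is that you also invoke Huang's sectional-curvature bound, whereas the paper gets the lower bound on $\min_P K(P)$ for free from Teo's Ricci bound through the left-hand inequality of the chain.
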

\bp
Since $\lsys(X)\geq \eps$, by \cite[Proposition 3.3]{Teo09} of Teo we have
\beqar
-1 \prec \min_{\mu \in T_X\sM_g}\Ric(\mu).
\eeqar

\noindent Since the \wep sectional curvature is negative \cite{Wolpert86} or \cite{Tromba86}, we have
\beqar
\min_{\mu \in T_X\sM_g}\Ric(\mu) <\min_{\emph{real plane $\emph{span}\{\mu_1,\mu_2\}  \subset T_X\sM_g$}}K(\mu_1,\mu_2).
\eeqar

\noindent Thus, it suffices to show that $\min_{\emph{real plane $\emph{span}\{\mu_1,\mu_2\}  \subset T_X\sM_g$}}K(\mu_1,\mu_2) \prec -1 $. This can be directly followed by using Theorem \ref{s-holk-uup} because
\beqar
\min_{\emph{real plane $\emph{span}\{\mu_1,\mu_2\}  \subset T_X\sM_g$}}K(\mu_1,\mu_2)\leq \min_{\mu \in T_{X}\sM_g}\HolK(\mu)\prec -1.
\eeqar
Which completes the proof.
\ep

It was shown in \cite[Corollary 23]{Wolpert12} that there exists a constant $c_{g}<0$, only depending on $g$, such that any subspace $S \subset T_X\sM_g$ with $\dim_{\R}S>(3g-3)$ contains a section with \wep sectional curvature at most $c_g$. In light of Theorem \ref{s-holk-uup}, it would be interesting to know that whether this result of Wolpert is still true if one replaces $c_g$ by a uniform constant $c<0$. More precisely,
\begin{ques}
Does there exist a uniform constant $c<0$, independent of $g$, such that any subspace $S \subset T_X\sM_g$ with $\dim_{\R}S>(3g-3)$ contains a section with \wep sectional curvature at most $c$?
\end{ques}

\subsection{Application to \wep distance}
Let $\Teich(S_g)$ be the \tec space $\sT_g$ endowed with the \wep metric. In \cite{W-inradius} we studied the behavior of the systole function along \wep geodesics and proved the following uniform Lipschitz property.
\begin{tms} \cite[Theorem 1.3]{W-inradius}\label{s-wp-lb}
For all $X,Y\in \Teich(S_{g})$,
\[|\sqrt{\ell_{sys}(X)}-\sqrt{\ell_{sys}(Y)}|\prec \dist_{wp}(X,Y)\]
where $\dist_{wp}$ is the \wep distance.
\end{tms}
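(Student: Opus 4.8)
The plan is to read the inequality directly off Part~(1) of Theorem~\ref{s-ub-lp}, using nothing beyond the chain rule and the Cauchy--Schwarz inequality along \wep geodesics; in particular no appeal to Riera's formula is needed, the single analytic input being the uniform gradient bound of Theorem~\ref{s-ub-lp}. Fix $X,Y\in\Teich(S_g)$ and let $\gamma\colon[0,T]\to\Teich(S_g)$ be the \wep geodesic joining them, parametrised by arclength, so that $T=\dist_{wp}(X,Y)$ and $\gamma'(t)$ has unit \wep norm for all $t$; by Wolpert's geodesic convexity this geodesic lies in $\Teich(S_g)$. Set $f(t):=\sqrt{\lsys(\gamma(t))}$. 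The goal is to prove $|f(0)-f(T)|\prec T$ with a constant independent of $g$, which is exactly the assertion.

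First I would dispose of the regularity of $f$. The systole is bounded on the compact set $\gamma([0,T])$, say $\lsys(\gamma(t))\leq M$ throughout; since on any single hyperbolic surface only finitely many closed geodesics have length at most $M$ and geodesic lengths vary continuously, a routine compactness argument shows that only finitely many free homotopy classes realise the systole anywhere along $\gamma$. Hence near each time $\lsys\circ\gamma$ is the minimum of a fixed finite family of real-analytic functions $\ell_\alpha\circ\gamma$, so $f$ is continuous, piecewise real-analytic, and in particular absolutely continuous on $[0,T]$. At every $t$ outside the finite set of times where the minimum is attained by more than one class, $f$ is differentiable and, choosing a systolic curve $\alpha_t$ at $\gamma(t)$,
\[
f'(t)=\frac{1}{2\sqrt{\ell_{\alpha_t}(\gamma(t))}}\,\langle\grad\ell_{\alpha_t}(\gamma(t)),\gamma'(t)\rangle_{wp},
\]
where $\langle\cdot,\cdot\rangle_{wp}$ is the \wep inner product and I have used that $f=\sqrt{\ell_{\alpha_t}\circ\gamma}$ on a neighbourhood of $t$.

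Now the main estimate. By Cauchy--Schwarz and the fact that $\gamma'(t)$ has unit \wep norm,
\[
|f'(t)|\leq\frac{\|\grad\ell_{\alpha_t}(\gamma(t))\|_2}{2\sqrt{\ell_{\alpha_t}(\gamma(t))}},
\]
and since $\alpha_t$ is a systolic curve at $\gamma(t)$, Part~(1) of Theorem~\ref{s-ub-lp} with $p=2$ gives $\|\grad\ell_{\alpha_t}(\gamma(t))\|_2\prec\sqrt{\lsys(\gamma(t))}=\sqrt{\ell_{\alpha_t}(\gamma(t))}$ with a \emph{universal} constant; as this quantity is $\Mod(S_g)$-invariant, the bound --- stated in Theorem~\ref{s-ub-lp} for points of $\sM_g$ --- applies verbatim at $\gamma(t)\in\Teich(S_g)$. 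Hence $|f'(t)|\prec 1$ for almost every $t$, and integrating,
\[
\bigl|\sqrt{\lsys(X)}-\sqrt{\lsys(Y)}\bigr|=|f(0)-f(T)|\leq\int_0^T|f'(t)|\,dt\prec T=\dist_{wp}(X,Y),
\]
which is the claim. The only point that genuinely needs care is the bookkeeping of the second paragraph --- that $\lsys$, restricted to a compact \wep geodesic segment, is a minimum of \emph{finitely many} real-analytic functions, so that $f$ is a.e.\ differentiable with the derivative written above; this can equally be bypassed by estimating the upper Dini derivative of $f$, which at \emph{every} $t$ is bounded by $\max_{\alpha}\|\grad\ell_{\alpha}(\gamma(t))\|_2\big/\bigl(2\sqrt{\ell_{\alpha}(\gamma(t))}\bigr)$, the maximum being over the (finitely many) systolic curves at $\gamma(t)$. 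Granting Theorem~\ref{s-ub-lp}(1), everything else is elementary.
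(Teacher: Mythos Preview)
Your proof is correct and follows essentially the same route as the paper's outline: take the \wep geodesic between $X$ and $Y$, use the piecewise real-analyticity of $\lsys$ along it to differentiate $\sqrt{\lsys}$ almost everywhere, bound the derivative via Cauchy--Schwarz and Theorem~\ref{s-ub-lp}(1) with $p=2$, and integrate. The only cosmetic difference is that you spell out the finiteness-of-systolic-classes compactness argument directly, whereas the paper cites \cite[Lemma~4.5]{W-inradius} for the piecewise real-analyticity.
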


\noindent One essential step in the proof of the theorem above is to show Theorem \ref{s-ub-lp} for $p=2$. We outline the proof of the theorem above as follows. One may see \cite{W-inradius} for more details. 
\bp [Outline proof] 
By the real analyticity of the geodesic length function, the Teichm\"uller space $\sT_g$ and the \wep metric, it was shown in \cite[Lemma 4.5]{W-inradius} that the systole function $\lsys(\cdot): \Teich(S_g) \to \mathbb{R}^{>0}$ is piecewisely real analytic in the sense that for any \wep geodesic segment $c:[0,r]\to \Teich(S_g)$ of arc-length parameter, where $r>0$ is any constant, the function $\lsys(c(t)):[0,r]\to R^{>0}$ is piecewisely real analytic.

By \cite{Wolpert87} one may let $c:[0,s]\to \Teich(S_g)$ be the unique \wep geodesic joining $X$ and $Y$ of arc-length parameter where $s=\dist_{wp}(X,Y)>0$. Since the function $\lsys(c(t)):[0,s]\to R^{>0}$ is piecewisely real analytic, one may apply Theorem \ref{s-ub-lp} for the case $p=2$ to obtain that for all $t\in (0,s)$ except finite values,
\beqar \label{s-inr-f-1}
|| \grad \lsys^{\frac{1}{2}}(c(t))||_2\asymp 1.
\eeqar
Then by the Cauchy-Schwarz inequality
\beqar
|\sqrt{\ell_{sys}(X)}-\sqrt{\ell_{sys}(Y)}|&=& |\int_{0}^{s} \langle \grad \lsys^{\frac{1}{2}}(c(t)),c'(t) \rangle_{wp}dt|\\
&\leq &\int_{0}^{s}  ||\grad \lsys^{\frac{1}{2}}(c(t))||_2dt \\
&\asymp & s= \dist_{wp}(X,Y)
\eeqar
where we apply \eqref{s-inr-f-1} in the last equation. Which completes the proof.
\ep
\begin{rems}
\ben
\item Bridgeman and Bromberg in \cite{BB-20} applied their explicit bounds on $|| \grad \lsys(X)||_{wp}$ to show that the function $$\sqrt{\ell_{sys}(\cdot)}:\Teich(S_g) \to \R^{>0}$$ is $\frac{1}{2}$-Lipschitz.

\item In \cite{Wu22}, motivated by the work \cite{RT18} of Rupflin and Topping, we use a purely differential geometrical method to show that $\sqrt{\ell_{sys}(\cdot)}$ is $0.5492$- Lipschitz, without using any estimations on $||\grad(\ell_{sys}(\cdot))||_{wp}$.
\een
\end{rems}

\noindent In \cite{W-inradius} we applied the uniform Lipschitz Theorem above to determine the growth rate of the inradius of $\sM_g$ or $\Teich(S_g)$ for large genus (also for large punctures). Recall that the \emph{inradius} $\InR(\sM_g)$ of $\sM_g$ is defined as $$ \InR(\sM_g):=\sup_{X\in \sM_g}\dist_{wp}(X,\partial \sM_g)$$
where $\partial \sM_g$ is the \wep boundary of $\sM_g$ which is also the same as the boundary of the Deligne-Mumford compactification of $\sM_g$ \cite{Masur76}. It is known by Wolpert \cite[Section 4]{Wolpert08} that for any $X\in \sM_g$, $\dist_{wp}(X,\sM_g^{\alpha})\leq \sqrt{2\pi \cdot \ell_{\alpha}(X)}$ where $\sM_g^{\alpha}$ is the stratum of $\sM_g$ whose pinching curve is $\alpha$. If we choose $\alpha \subset X$ with $\ell_\alpha(X)=\lsys(X)$, then $\dist_{wp}(X, \sM_g^{\alpha})\leq \sqrt{2\pi \lsys(X)}$. It is well-known that $\lsys(X)\prec \ln{(g)}$. So we have $\InR(\sM_g)\prec \sqrt{\ln{(g)}}$. For the other direction, first by Buser-Sarnak \cite{BS94} one may choose a surface $\mathcal{X}_g\in \sM_g$ such that $\lsys(\mathcal{X}_g)\asymp \ln{(g)}$. Clearly the uniform Lipschitz Theorem above implies that $\dist_{wp}(\mathcal{X}_g, \partial \sM_g)\succ \sqrt{\ln{(g)}}$ which in particular implies that $\InR(\sM_g)\succ \sqrt{\ln{(g)}}$. Thus, we have
\begin{tms} \cite[Theorem 1.1]{W-inradius} \label{inradius}
For all $g\geq 2$, $$\InR(\sM_g)\asymp \sqrt{\ln{(g)}}.$$
\end{tms}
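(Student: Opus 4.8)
The plan is to establish the two estimates $\InR(\sM_g) \prec \sqrt{\ln g}$ and $\InR(\sM_g) \succ \sqrt{\ln g}$ separately: the first by pinching a systolic curve, the second by exhibiting surfaces of large systole that are forced to lie far from the \wep boundary.

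For the upper bound, fix $X \in \sM_g$ and choose a systolic geodesic $\alpha \subset X$, so that $\ell_\alpha(X) = \lsys(X)$. Wolpert's estimate on the \wep distance from a surface to the stratum obtained by pinching a prescribed curve (\cite[Section 4]{Wolpert08}) gives $\dist_{wp}(X,\partial\sM_g) \le \dist_{wp}(X,\sM_g^{\alpha}) \le \sqrt{2\pi\,\lsys(X)}$. It then suffices to recall the classical bound $\lsys(X) \prec \ln g$, valid uniformly over $\sM_g$: the surface $X$ contains an embedded hyperbolic disk of radius $\lsys(X)/2$, whose area $2\pi(\cosh(\lsys(X)/2)-1)$ is at most $\Area(X) = 4\pi(g-1)$ by Gauss-Bonnet, which forces $\lsys(X) \le 2\arccosh(2g-1) \prec \ln g$. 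Taking the supremum over $X \in \sM_g$ yields $\InR(\sM_g) \prec \sqrt{\ln g}$.

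For the lower bound, Buser-Sarnak \cite{BS94} supplies, for each $g$, a surface $\mathcal{X}_g \in \sM_g$ with $\lsys(\mathcal{X}_g) \asymp \ln g$. Since the \wep completion of $\sT_g$ is augmented \tec space, whose frontier $\partial\sM_g$ consists of nodal surfaces and is approached precisely by pinching curves (\cite{Masur76}), the function $\sqrt{\lsys(\param)}$ extends continuously by $0$ along every sequence in $\sM_g$ leaving all compacta toward $\partial\sM_g$. Applying the uniform Lipschitz property of Theorem \ref{s-wp-lb} with $X = \mathcal{X}_g$ and $Y$ a point of $\sM_g$ near $\partial\sM_g$ with $\lsys(Y)$ arbitrarily small gives $\sqrt{\lsys(\mathcal{X}_g)} = \bigl|\sqrt{\lsys(\mathcal{X}_g)} - \sqrt{\lsys(Y)}\bigr| + o(1) \prec \dist_{wp}(\mathcal{X}_g, Y) + o(1)$; letting $Y$ converge to $\partial\sM_g$ and taking the infimum yields $\dist_{wp}(\mathcal{X}_g,\partial\sM_g) \succ \sqrt{\ln g}$, hence $\InR(\sM_g) \succ \sqrt{\ln g}$.

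The substantive input is the uniform Lipschitz Theorem \ref{s-wp-lb} itself — which is where Theorem \ref{s-ub-lp} for $p=2$ enters — and I would use it as a black box. The only point in the present argument that needs care is the limiting step that passes the Lipschitz inequality to $\partial\sM_g$: one must confirm that $\sqrt{\lsys(\param)}$ genuinely tends to $0$ along every sequence in $\sM_g$ converging to the \wep boundary, i.e. that the \wep frontier coincides with the systole-zero locus. This follows from the Deligne-Mumford/augmented \tec description of $\partial\sM_g$ together with the stratum-distance bound of Wolpert used for the upper bound, so it is routine; everything else is elementary.
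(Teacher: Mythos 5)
Your proposal is correct and follows essentially the same route as the paper: the upper bound via Wolpert's stratum-distance estimate $\dist_{wp}(X,\sM_g^{\alpha})\leq \sqrt{2\pi\,\ell_{\alpha}(X)}$ applied to a systolic curve together with $\lsys(X)\prec \ln g$, and the lower bound via Buser--Sarnak surfaces combined with the uniform Lipschitz Theorem \ref{s-wp-lb}. The extra care you take with the limiting step at $\partial\sM_g$ (that $\sqrt{\lsys}$ extends by $0$ there) is a point the paper treats as immediate, but your treatment is consistent with it.
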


\begin{rem}
Bridgeman and Bromberg in \cite{BB-20} showed that
\[\lim \limits_{g\to \infty} \frac{\InR(\sM_g)}{\sqrt{\max\limits_{X\in \sM_g}\lsys(X)}}=\sqrt{2\pi}.\]
One may also see \cite{Wu22} for an alternative proof.
\end{rem}

\begin{rem}
Cavendish-Parlier \cite{CP12} showed that the diameter $\diam(\sM_g)$ of $\sM_g$ satisfies $\sqrt{g} \prec \diam(\sM_{g}) \prec \sqrt{g}\cdot \ln{g}$ where the upper bound refines Brock's quasi-isometry of $\Teich(S_{g})$ to the pants graph \cite{Brock03}. As far as we know, it is still an open problem: \textsl{whether $\sqrt{g}\cdot \ln{g}$ is the correct growth rate for $\diam(\sM_g)$ as $g\to \infty$?}
\end{rem}


\subsection{Application to the $L^p$ $(1< p \leq \infty)$ metric on $\sM_g$}
The \wep metric is the $L^2$-metric on $\sM_g$ and the \tec metric is the $L^1$ metric on $\sM_g$. Similarly for any $p\in [1,\infty]$, the $L^p$ metric on $\sM_g$ can be defined. One may see Section \ref{Lp-metric-inco} for precise definitions. We denote by $(\sM_g, ||\cdot||_{L^p})$ the moduli space $\sM_g$ endowed with the $L^p$ metric, and let $\dist_p(\cdot, \cdot)$ denote the $L^p$-distance on $\sM_g$. As another application of Theorem \ref{s-ub-lp} we show that   
\bt\label{i-Lp-incomplete}
Let $X \in \sM_g$ and $\alpha \subset X$ be a non-trivial simple loop with $\ell_\alpha(X)\leq L_0$ where $L_0>0$ is any given constant. Then for any $p\in (1, \infty]$,
\[\dist_p(X, \sM_g^{\alpha})\prec (\ell_{\alpha}(X))^{1-\frac{1}{p}}\]
where $\sM_g^{\alpha}$ is the stratum of $\sM_g$ whose pinching curve is $\alpha$. In particular, the space $(\sM_g, ||\cdot||_{L^p})$ $(1< p \leq \infty)$ is incomplete.
\et

\begin{rems}
\ben

\item[(i).] As introduced in the proof of Theorem \ref{inradius}, for $p=2$ Wolpert in \cite[Section 4]{Wolpert08} showed that for any $X\in \sM_g$, $$\dist_{wp}(X,\sM_g^{\alpha})\leq \sqrt{2\pi \cdot \ell_{\alpha}(X)}.$$ 
Where $\ell_\alpha(X)$ is not required to be uniformly bounded.

\item[(ii).] Theorem \ref{Lp-incomplete} tells that there exists a constant $K(L_0, p)>0$ depending on $L_0$ and $p$ such that $\dist_p(X, \sM_g^{\alpha})\leq K(L_0,p)\cdot (\ell_{\alpha}(X))^{1-\frac{1}{p}}$ for $X\in \sM_g$ with $\ell_\alpha(X)\leq L_0$. One may easily check in the proof that as $p\to 1$, the constant $K(L_0,p)$ goes to $+\infty$. This can be also followed by the completeness of the \tec metric which is the same as the $L^1$ metric.

\item[(iii).] There are interesting connections between the $L^p$ metric on $\sM_g$ and renormalized volumes of quasi-Fuchsian manifolds. One may see \url{http://math.harvard.edu/~ctm/sem/2014.html} for related discussions. We are grateful to Curt McMullen for noticing us these notes. 
\een
\end{rems}

\subsection*{Plan of the paper.} Section \ref{np} provides some necessary background and the basic properties on two-dimensional hyperbolic geometry and the Weil-Petersson metric. We show that $||\grad \lsys(X)||_\infty$ is uniformly bounded from above in Section \ref{s-uup-elem} and \ref{s-uup-infi}. Theorem \ref{s-ub-lp} is established in Section \ref{s-lp-asymp}. In Section \ref{s-appl-wp} we discuss several applications of Theorem \ref{s-ub-lp} including proving Theorem \ref{s-ub-holk} and \ref{s-holk-uup}. In the last Section \ref{Lp-metric-inco} we prove Theorem \ref{i-Lp-incomplete}.


\subsection*{Acknowledgements.}
The author would like to express his appreciation to Scott Wolpert for many helpful and invaluable conversations on this project. He wants to thank Michael Wolf for a collaboration with him on a related project where some of ideas in this paper emerged from. He also would like to thank Jeffrey Brock, Martin Bridgeman, Shing-Tung Yau and Xuwen Zhu for their interests. This work is supported by the NSFC grant No. $12171263$ and a grant from Tsinghua University.

\section{Notations and Preliminaries}\label{np} 
In this section we set up the notations and provide some necessary background on two-dimensional hyperbolic geometry, \tec theory and the \wep metric.
\subsection{Hyperbolic upper half plane} Let $\mathbb{H}$ be the upper half plane endowed with the hyperbolic metric $\rho(z)|dz|^2$ where
$$\rho(z)=\frac{1}{(\im(z))^2}.$$

A geodesic line in $\mathbb{H}$ is either a vertical line or an upper semi-circle centered at some point on the real axis. For $z=(r,\theta) \in \mathbb{H}$ given in polar coordinates where $\theta \in (0,\pi)$, the hyperbolic distance between $z$ and the imaginary axis $\textbf{i}\mathbb{R^+}$ is
\begin{eqnarray}\label{i-dis}
\dist_{\mathbb{H}}(z, \textbf{i}\mathbb{R^+})=\ln|\csc{\theta}+|\cot{\theta}||.
\end{eqnarray}

\noindent Thus, 
\begin{eqnarray}\label{i-exp}
e^{-2\dist_{\mathbb{H}}(z, \textbf{i}\mathbb{R^+})}\leq \sin^2{\theta}=\frac{\Im^2(z)}{|z|^2}\leq 4e^{-2\dist_{\mathbb{H}}(z, \textbf{i}\mathbb{R^+})}.
\end{eqnarray}

It is known that any eigenfunction with positive eigenvalue of the hyperbolic Laplacian of $\mathbb{H}$ satisfies the mean value property \cite[Coro.1.3]{Fay77}. For $z=(r,\theta) \in \mathbb{H}$ given in polar coordinate, the function 
\[u(\theta)=1-\theta \cot{\theta}\] 
is a positive $2$-eigenfunction. Thus, $u$ satisfies the mean value property. It is not hard to see that $\min\{u(\theta), u(\pi-\theta)\}$ also satisfies the mean value property. Since $\min\{u(\theta), u(\pi-\theta)\}$ is comparable to $\sin^2{\theta}$, from inequality (\ref{i-exp}) we know that the function $e^{-2\dist_{\mathbb{H}}(z, \textbf{i}\mathbb{R^+})}$ satisfies the mean value property in $\mathbb{H}$. The following lemma is the simplest version of \cite[Lemma 2.4]{Wolpert08}.
\begin{lemma}\label{mvp}
For any $r>0$ and $p \in \mathbb{H}$, there exists a positive constant $c(r)$, only depending on $r$, such that
\begin{eqnarray*}
e^{-2\dist_{\mathbb{H}}(p, \textbf{i}\mathbb{R^+})}\leq c(r) \int_{B_{\mathbb{H}}(p;r)}e^{-2\dist_{\mathbb{H}}(z, \textbf{i}\mathbb{R^+})}\dArea
\end{eqnarray*}
where $B_{\mathbb{H}}(p;r)=\{z\in \mathbb{H}; \ \dist_{\mathbb{H}}(p,z)< r\}$ is the hyperbolic geodesic ball of radius $r$ centered at $p$ and $\dArea$ is the hyperbolic area form. 
\end{lemma}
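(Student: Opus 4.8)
The plan is to deduce Lemma \ref{mvp} from the mean value property established in the preceding paragraph, namely that the function $w(z):=e^{-2\dist_{\mathbb{H}}(z,\mathbf{i}\mathbb{R}^+)}$ is a subsolution of the mean value type: $w$ is comparable to a nonnegative $2$-eigenfunction of the hyperbolic Laplacian that satisfies the mean value property. So the first thing I would do is invoke the mean value property in its precise form: there is a function (call it $v=\min\{u(\theta),u(\pi-\theta)\}$, where $u(\theta)=1-\theta\cot\theta$) which is a nonnegative $2$-eigenfunction, satisfies $v(p)=\frac{1}{\vol B_{\mathbb{H}}(p;r)}\int_{B_{\mathbb{H}}(p;r)} v\,\dArea$ for every $p$ and $r>0$ (the mean value property, cf. \cite[Coro.1.3]{Fay77}), and is two-sided comparable to $w$ with absolute constants: $A_1 v(z)\le w(z)\le A_2 v(z)$ for all $z\in\mathbb{H}$, since both $v$ and $w$ are comparable to $\sin^2\theta$ by \eqref{i-exp}.

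Given this, the estimate is immediate. For fixed $r>0$, using first the lower comparability $w\ge A_1 v$, then the mean value equality for $v$ at $p$, and then the upper comparability $v\le A_2^{-1} w$, we get
\[
w(p)\ \le\ A_2 v(p)\ =\ \frac{A_2}{\vol B_{\mathbb{H}}(p;r)}\int_{B_{\mathbb{H}}(p;r)} v\,\dArea\ \le\ \frac{A_2}{A_1\,\vol B_{\mathbb{H}}(p;r)}\int_{B_{\mathbb{H}}(p;r)} w\,\dArea.
\]
Since the hyperbolic metric is homogeneous, $\vol B_{\mathbb{H}}(p;r)=\vol B_{\mathbb{H}}(\mathbf{i};r)=:V(r)$ depends only on $r$, so setting $c(r):=\tfrac{A_2}{A_1 V(r)}$ finishes the proof. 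Here the constants $A_1,A_2$ are genuinely universal (they come from elementary one-variable estimates on $\sin^2\theta$ versus $1-\theta\cot\theta$ on $(0,\pi)$), so $c(r)$ depends only on $r$, exactly as claimed.

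**The only real point to be careful about** is the justification that $v$ — not just $u$ but the truncated function $\min\{u(\theta),u(\pi-\theta)\}$ — actually satisfies the mean value property rather than merely a mean value inequality. The text above asserts this, and the cleanest way I would argue it: $u(\theta)=1-\theta\cot\theta$ and its reflection $u(\pi-\theta)$ are both $2$-eigenfunctions, hence each satisfies the mean value \emph{equality}; their minimum is a $2$-superharmonic-type function (a minimum of solutions), so it satisfies the mean value \emph{inequality} $v(p)\ge \tfrac{1}{V(r)}\int_{B(p;r)}v\,\dArea$, which is the direction actually used above (we only needed $v(p)\ge$ average to bound $w(p)$ from above — wait, we needed $v(p)=$ average; but in fact the inequality $v(p)\ge \tfrac{1}{V(r)}\int v$ is exactly what the displayed chain requires, reading the middle step as $\ge$). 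So strictly speaking one only needs the mean value \emph{inequality} for $v$, which follows for free from $v$ being a minimum of eigenfunctions, and the appeal to the full mean value property of $e^{-2\dist}$ in the preamble can be weakened to this inequality. Either way, this is the step where one should state precisely which direction of the mean value relation is invoked; everything else is a one-line comparison. I expect no genuine obstacle here, as this is, as the text says, "the simplest version of \cite[Lemma 2.4]{Wolpert08}."
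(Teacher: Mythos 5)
Your route is the same one the paper sketches (the paper itself only outlines this lemma, deferring to Wolpert): compare $e^{-2\dist_{\mathbb{H}}(\cdot,\mathbf{i}\mathbb{R}^+)}$ with the $2$-eigenfunction $u(\theta)=1-\theta\cot\theta$ via $\sin^2\theta$ and invoke a mean value property. But the key step is the one you get backwards: the direction of the mean value relation you actually justify for $v=\min\{u(\theta),u(\pi-\theta)\}$ is the opposite of the one your chain needs. Since $\Delta u=2u\geq 0$, each of $u(\theta)$ and $u(\pi-\theta)$ is subharmonic and satisfies the sub-mean-value relation $u(p)\leq c'(r)\int_{B_{\mathbb{H}}(p;r)}u\,\dArea$ (in fact an identity, with constant $2/(\pi(\cosh 2r-1))$ rather than $1/\vol B_{\mathbb{H}}(p;r)$). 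A minimum of subharmonic functions is not subharmonic; what follows formally from the minimum structure is only the super-mean-value inequality $v(p)\geq c'(r)\int_{B_{\mathbb{H}}(p;r)}v\,\dArea$ (choose $i$ with $v(p)=u_i(p)$ and use $u_i\geq v$ under the integral). Your displayed chain $w(p)\leq A_2 v(p)\leq \frac{A_2}{A_1\vol B}\int_{B}w\,\dArea$ needs the middle step to read $v(p)\leq C(r)\int_{B}v\,\dArea$; your closing parenthetical claiming that $v(p)\geq$ average ``is exactly what the displayed chain requires, reading the middle step as $\geq$'' is false --- with a $\geq$ in the middle the inequalities do not compose, and the proof does not close.

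The gap is repairable, and in fact the lemma is elementary and needs none of this machinery. For $z\in B_{\mathbb{H}}(p;r)$ the triangle inequality gives $\dist_{\mathbb{H}}(z,\mathbf{i}\mathbb{R}^+)\leq \dist_{\mathbb{H}}(p,\mathbf{i}\mathbb{R}^+)+r$, hence $e^{-2\dist_{\mathbb{H}}(p,\mathbf{i}\mathbb{R}^+)}\leq e^{2r}\,e^{-2\dist_{\mathbb{H}}(z,\mathbf{i}\mathbb{R}^+)}$; integrating over the ball and using that every hyperbolic $r$-ball has the same area $V(r)$ gives the lemma with $c(r)=e^{2r}/V(r)$. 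Since $c(r)$ is allowed to depend on $r$ arbitrarily, this suffices for every application in the paper (where $r$ is always a fixed constant). If you insist on the eigenfunction route, you must prove the sub-mean-value inequality for $v$ directly, e.g.\ by splitting into the case $\dist_{\mathbb{H}}(p,\mathbf{i}\mathbb{R}^+)>r$, where $B_{\mathbb{H}}(p;r)$ lies in one half-plane so that $v$ agrees there with a single eigenfunction and Fay's identity applies, and the case $\dist_{\mathbb{H}}(p,\mathbf{i}\mathbb{R}^+)\leq r$, where $v(p)\leq 1$ while $\int_{B_{\mathbb{H}}(p;r)}v\,\dArea$ is bounded below by a constant times $e^{-4r}V(r)$ --- but the second case is the triangle-inequality argument again.
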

\subsection{Uniform collars} 
Let $X$ be a closed hyperbolic surface of genus $g$ $(g\geq 2)$, and $\alpha \subset X$ be an essential simple loop. There always exists a unique closed geodesic, still denoted by $\alpha$, representing this loop. We denote by $\ell_\alpha(X)$ the length of $\alpha$ in $X$. If the $\delta$-neighborhood $$\Cc_{\delta}(\alpha):=\{x\in X;\ \dist(x,\alpha)<\delta\}$$ is isometric to a cylinder $(\rho,t)\in (-\delta,\delta) \times \mathbb S ^1$, where $\mathbb S ^1 = \R / \Z$, endowed with the standard hyperbolic metric
$$ds^2=d\rho^2 + \ell_{\alpha}(X)^2 \cosh^2\rho dt^2,$$
this set $\Cc_{\delta}(\alpha)$ is called a \emph{$\delta$-collar} of $\alpha$ on $X$. Let $w(\alpha)$ be the supremum of all such $\delta>0$. Then for all $0<\delta<w(\alpha)$, the geodesic arcs of length $\delta$ emanating perpendicularly from $\alpha$ are pairwisely disjoint. We call $\Cc_{w(\alpha)}(\alpha)$ the \emph{maximal collar} of $\alpha$, and $w(\alpha)$ the \emph{width} of $\alpha$.   
First we recall the following classical Collar Lemma. One may refer to \cite[Theorem 4.1.1]{Buser10} or \cite[Theorem 3.8.3]{Hubbard06} for more details.

\begin{lemma}[Collar lemma-1]\label{collar-1}
For any essential simple closed geodesic $\alpha \subset X$, then the width $w(\alpha)$ of $\alpha$ satisfies that
\[w(\alpha)\geq \frac{1}{2} \ln (\frac{\cosh{\frac{\ell_{\alpha}(X)}{2}}+1}{\cosh{\frac{\ell_{\alpha}(X)}{2}}-1}).\]
\end{lemma}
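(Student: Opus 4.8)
The statement to prove is the Collar Lemma (Lemma \ref{collar-1}), giving the lower bound
\[w(\alpha)\geq \frac{1}{2} \ln \left(\frac{\cosh{\frac{\ell_{\alpha}(X)}{2}}+1}{\cosh{\frac{\ell_{\alpha}(X)}{2}}-1}\right).\]

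The plan is to lift the picture to the universal cover $\mathbb{H}$ and reduce the claim to a disjointness statement about half-collars around the axis of a hyperbolic isometry. Choose a hyperbolic element $A \in \PSL(2,\R)$ representing $\alpha$ with translation length $\ell = \ell_\alpha(X)$, normalized so that its axis is the imaginary axis $\mathbf{i}\mathbb{R}^+$ and $A(z) = e^{\ell} z$. A $\delta$-collar of $\alpha$ exists on $X$ precisely when the $\delta$-neighborhood of the axis $\mathbf{i}\mathbb{R}^+$ in $\mathbb{H}$ — which by \eqref{i-dis} is the Euclidean sector $\{(r,\theta) : |\ln|\csc\theta + |\cot\theta|| | < \delta\}$, i.e. an angular wedge around the axis — is embedded when pushed down to $X$. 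Equivalently, for every $g$ in the surface group $\Gamma$ not a power of $A$, the neighborhood $\mathcal{N}_\delta(\mathbf{i}\mathbb{R}^+)$ and its translate $g\,\mathcal{N}_\delta(\mathbf{i}\mathbb{R}^+) = \mathcal{N}_\delta(g\cdot \mathbf{i}\mathbb{R}^+)$ are disjoint. Since $g\cdot\mathbf{i}\mathbb{R}^+$ is the axis of the conjugate $gAg^{-1}$, this reduces to: if two distinct axes of primitive hyperbolic elements in a Fuchsian group are disjoint, their $\delta$-neighborhoods are disjoint for $\delta$ up to the claimed bound.

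The key step is the two-axis estimate. Let $\gamma_1, \gamma_2$ be the axes of $A$ and $gAg^{-1}$; because $\Gamma$ is discrete and torsion-free these axes cannot cross (a standard fact: intersecting axes of hyperbolic elements in a discrete group force either a common axis or an elliptic/parabolic element), so they are disjoint geodesics in $\mathbb{H}$. I would then show that the distance between them is bounded below in terms of $\ell$: using the trace identity, if $A$ and $B := gAg^{-1}$ generate a discrete group without a common fixed point, then $|\tr[A,B] - 2|$ is bounded below, and $\dist(\gamma_1,\gamma_2) \geq w$ where $w$ solves $\sinh\!\big(\tfrac{\ell}{2}\big)\sinh(\dist) \geq 1$ type inequality; more directly, one invokes the standard computation that two disjoint simple closed geodesics on a hyperbolic surface have disjoint collars of the stated half-width. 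Concretely: place $\gamma_1 = \mathbf{i}\mathbb{R}^+$ and use that the set of points at distance $<\delta$ from it is the wedge $\sin\theta > (\cosh\delta)^{-1}$; a competing axis $\gamma_2$ disjoint from $\gamma_1$ and stabilized by an element of translation length $\ell$ must, by the collar width computation for a single geodesic arc crossing the collar, stay outside this wedge once $\cosh\delta \leq \coth(\ell/2)$, which is exactly $e^{2\delta} \leq \frac{\cosh(\ell/2)+1}{\cosh(\ell/2)-1}$ after using $\coth(\ell/2) = \frac{\cosh(\ell/2)}{\sinh(\ell/2)}$ and a half-angle rearrangement; wait — more cleanly, $\cosh\delta = \coth(\ell/2)$ gives $e^{2\delta} = \frac{\cosh\delta+\sinh\delta}{\cosh\delta-\sinh\delta}\cdot\frac{\cosh\delta}{\cosh\delta}$, and substituting $\cosh\delta=\coth(\ell/2)$, $\sinh\delta=\sqrt{\coth^2(\ell/2)-1}=\operatorname{csch}(\ell/2)$ yields $e^{2\delta}=\frac{\cosh(\ell/2)+1}{\cosh(\ell/2)-1}$, so $\delta = \tfrac12\ln\frac{\cosh(\ell/2)+1}{\cosh(\ell/2)-1}$. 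This pins down the bound.

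I expect the main obstacle to be the disjointness/distance argument for two distinct axes: one must rule out that a short loop $g$ pushes the axis of $A$ back into its own $\delta$-neighborhood, and the clean way is to combine (i) the no-crossing fact for axes in a discrete torsion-free group and (ii) an explicit hyperbolic-trigonometry estimate in the quadrilateral with two sides on $\gamma_1, \gamma_2$, a side along the common perpendicular of length $\dist(\gamma_1,\gamma_2)$, and a side realizing one translation by $A$ of length $\ell$; Fermi coordinates along $\gamma_1$ turn this into the inequality $\sinh(\ell/2)\sinh(\dist(\gamma_1,\gamma_2)/2)\geq \ldots$, but since the clean statement and its proof are entirely classical I would simply cite \cite[Theorem 4.1.1]{Buser10} or \cite[Theorem 3.8.3]{Hubbard06} for the two-geodesic version and present only the reduction and the final trigonometric identification of the constant. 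Everything else — that the wedge neighborhood embeds iff it is disjoint from all nontrivial translates, and the algebra converting $\cosh\delta = \coth(\ell_\alpha/2)$ into the displayed logarithm — is routine.
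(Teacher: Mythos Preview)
The paper does not prove this lemma at all: it is stated as the classical Collar Lemma and the reader is referred to \cite[Theorem 4.1.1]{Buser10} or \cite[Theorem 3.8.3]{Hubbard06}. Your eventual fallback --- citing exactly those references --- is therefore precisely what the paper does, and that alone would suffice.

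That said, your sketch contains a genuine error worth flagging. You assert that the axes of $A$ and $gAg^{-1}$ cannot cross because ``intersecting axes of hyperbolic elements in a discrete group force either a common axis or an elliptic/parabolic element.'' This is false: two hyperbolic elements in a discrete torsion-free Fuchsian group can certainly have transversely intersecting axes --- any two simple closed geodesics on $X$ meeting in a single point give such a pair. The correct reason the translates $g\cdot\mathbf{i}\mathbb{R}^+$ are pairwise disjoint is that $\alpha$ is \emph{simple}: the preimage of a simple closed geodesic under the covering map is a disjoint union of geodesic lines. Without invoking simplicity you cannot get disjointness of the axes, and the rest of your argument (the Fermi-coordinate wedge and the trigonometric identity $\cosh\delta=\coth(\ell/2)\Leftrightarrow e^{2\delta}=\frac{\cosh(\ell/2)+1}{\cosh(\ell/2)-1}$, which is correct) does not get off the ground. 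Once simplicity is used to secure disjointness of the lifts, the remainder of your outline is the standard proof found in the cited references.
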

\noindent As the length $\ell_{\alpha}(X)$ of the central closed geodesic $\alpha$ goes to $0$, the width $w(\alpha)$ tends to infinity. Set $f(\ell_\alpha(X))=\frac{1}{2} \ln (\frac{\cosh{\frac{\ell_{\alpha}(X)}{2}}+1}{\cosh{\frac{\ell_{\alpha}(X)}{2}}-1})$. Then the quantity $f(\ell_\alpha(X))$ tends to $0$ as $\ell_{\alpha}(X)$ goes to infinity. Another part of the classical Collar Lemma \cite[Theorem 4.1.1]{Buser10} or \cite[Theorem 3.8.3]{Hubbard06} says that $\Cc_{f(\ell_{\alpha_1}(X))}(\alpha_1)\cap \Cc_{f(\ell_{\alpha_2}(X))}(\alpha_2)=\emptyset$ for any $\alpha_1 \cap \alpha_2 =\emptyset$. In this paper we will not use this disjoint property.

The $\textsl{systole}$ of $X$ is the length of a shortest closed geodesic in $X$. Curves that realize the systole are often referred to \emph{systolic curves}. We denote by $\ell_{sys}(X)$ the systole of $X$. It is known by Buser-Sarnak \cite{BS94} that for each $g \geq 2$, $\lsys(X)\asymp \ln{(g)}$ for certain hyperbolic surface $X$ of genus $g$. Which in particular tells that $\ell_{\alpha}(X)$ could be arbitrarily large for large enough $g$. If $\alpha$ is a systolic curve, the following lemma may be well-known to experts. Since we can not find the obvious references, we provide a proof here for completeness. 
\begin{lemma}[Collar lemma-2]\label{collar-2}
For any $\alpha \subset X$ with $\ell_{\alpha}(X)=\lsys(X)$, then the width $w(\alpha)$ of $\alpha$ satisfies that
\[w(\alpha)\geq \frac{\ell_{\alpha}(X)}{4}.\]
\end{lemma}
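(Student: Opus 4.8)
The plan is to exploit the defining property of a systolic curve: no closed geodesic on $X$ is shorter than $\alpha$, so in particular the collar of $\alpha$ cannot ``wrap around'' onto itself without producing a short geodesic loop. Concretely, suppose for contradiction that $w(\alpha) < \frac{\ell_\alpha(X)}{4}$. By the classical Collar Lemma (Lemma \ref{collar-1}) the width is always at least $f(\ell_\alpha(X)) = \frac12 \ln\frac{\cosh(\ell_\alpha/2)+1}{\cosh(\ell_\alpha/2)-1}$; this function decays like $2e^{-\ell_\alpha/2}$ for large $\ell_\alpha$, so the bound $w(\alpha)\geq \ell_\alpha/4$ can only possibly fail when $\ell_\alpha(X)$ is large — say bigger than some absolute constant. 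Thus we may assume $\ell_\alpha(X)$ is large, which is exactly the regime where Buser--Sarnak-type surfaces live, and where $\alpha$ being systolic is a genuine constraint rather than an automatic one.

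The key step is the standard ``collar cannot close up'' argument. The embedded collar $\Cc_{w(\alpha)}(\alpha)$, of width $w(\alpha)$, degenerates at radius $w(\alpha)$: the injectivity fails there, which means there is a geodesic arc of length $2w(\alpha)$ emanating perpendicularly from $\alpha$ that returns to $\alpha$ (or two such arcs meeting), producing a nontrivial loop based on $\alpha$. More precisely, at $\rho = w(\alpha)$ two perpendicular geodesic rays from distinct points of $\alpha$ meet, so there is a geodesic segment $\gamma$ of length at most $2w(\alpha)$ with both endpoints on $\alpha$ and meeting $\alpha$ orthogonally at both ends (or a segment that closes into a loop). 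Closing up $\gamma$ along $\alpha$ gives a homotopically nontrivial closed curve; one checks it is not freely homotopic to a power of $\alpha$ (it crosses $\alpha$, so it has nonzero algebraic intersection with a transversal, or one argues via the universal cover that the corresponding hyperbolic element is not a power of the one for $\alpha$). Its free-homotopy class therefore contains a closed geodesic $\beta$ with $\ell_\beta(X) \leq \text{(length of the closing arc)} = 2w(\alpha) + (\text{a piece of }\alpha)$. To get the clean inequality, one should run the symmetric/midpoint version: there is a geodesic loop through the collar of length at most $2w(\alpha) + \ell_\alpha(X)/2$ or so — and then the right bookkeeping, using that the returning arc can be taken to hit $\alpha$ within half its length, yields a competitor closed geodesic of length $< \ell_\alpha(X)$ as soon as $w(\alpha) < \ell_\alpha(X)/4$. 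This contradicts $\ell_\alpha(X) = \lsys(X)$.

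I expect the main obstacle to be the precise geometric accounting at the boundary of the maximal collar: pinning down exactly which arc of length $\approx 2w(\alpha)$ appears, showing the resulting closed curve is essential and not a power of $\alpha$, and extracting the factor $\tfrac14$ rather than some worse constant. The cleanest way is probably to work in $\mathbb{H}$ with $\alpha$ lifted to a geodesic whose axis is, say, the imaginary axis, stabilized by $z \mapsto e^{\ell_\alpha(X)} z$; then a failure of embeddedness of the width-$\delta$ collar is the statement that some deck transformation $\phi$ (not a power of the one fixing the axis) moves the axis to a geodesic at distance $\leq 2\delta$, and one estimates the translation length of $\phi$ or of a suitable product $\phi \cdot (\text{power of } z\mapsto e^{\ell_\alpha}z)$ using hyperbolic trigonometry — the quadrilateral/pentagon formulas from Buser. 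The inequality $\cosh(\ell_\beta(X)/2) \leq \cosh(\ell_\alpha(X)/2)\cosh(2w(\alpha)) + (\text{lower order})$, combined with $\ell_\beta \geq \ell_\alpha$ and large $\ell_\alpha$, forces $\cosh(2w(\alpha)) \gtrsim 1$ trivially but a sharper version with the $\ell_\alpha/2$ slack gives $2w(\alpha) \gtrsim \ell_\alpha/2$. Carrying out this trigonometry carefully, and checking the small-$\ell_\alpha$ case directly against Lemma \ref{collar-1}, completes the proof.
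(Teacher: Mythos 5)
Your setup matches the paper's: assume $w(\alpha)<\ell_\alpha(X)/4$, use the degeneration of the maximal collar to produce two geodesic arcs $c'$, $c''$ of length $w(\alpha)$ meeting $\alpha$ perpendicularly and sharing an endpoint, close them up with the shorter subarc $\alpha_1$ of $\alpha$ (of length $\leq \ell_\alpha(X)/2$), and observe that the resulting piecewise-geodesic loop $\Delta_1=c'\cup c''\cup\alpha_1$ has length $<2\cdot\ell_\alpha(X)/4+\ell_\alpha(X)/2=\lsys(X)$. But there is a genuine gap in how you finish. A closed curve of length less than the systole yields a contradiction only if it is \emph{essential}; the case you must actually dispose of is the one where $\Delta_1$ is null-homotopic, and your proposal does not address it. Your stated justification (``it crosses $\alpha$, so it has nonzero algebraic intersection\dots'' or ``the corresponding hyperbolic element is not a power of the one for $\alpha$'') is aimed at ruling out that the curve is a power of $\alpha$, which is not the issue, and in any case does not apply to this configuration: $\Delta_1$ consists of a subarc of $\alpha$ together with two arcs leaving $\alpha$ perpendicularly, so it need not cross $\alpha$ transversally at all, and nothing you say excludes the trivial conjugacy class. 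The paper closes exactly this hole with Gauss--Bonnet: since $\length(\Delta_1)<\lsys(X)$, the loop \emph{must} bound a disk, but a geodesic triangle bounding a disk in a hyperbolic surface has interior angle sum strictly less than $\pi$, while $\Delta_1$ has two interior right angles at the feet of $c'$ and $c''$. That single observation is the missing idea.

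Your fallback route in the last paragraph (lift to $\mathbb{H}$, estimate the translation length of a competitor deck transformation via the quadrilateral/pentagon formulas) is a legitimate alternative in principle, but as written it is only a sketch: the inequality you quote is not derived, its direction and the identity of the competitor element are left vague, and the conclusion ``$2w(\alpha)\gtrsim\ell_\alpha/2$'' with an unspecified implicit constant would not deliver the exact bound $w(\alpha)\geq\ell_\alpha(X)/4$ that the lemma asserts (and that the $L^p$ estimates later in the paper rely on with this clean constant). The reduction in your first paragraph to the regime of large $\ell_\alpha(X)$ via Lemma~\ref{collar-1} is correct but unnecessary once the Gauss--Bonnet step is in place, since the paper's argument works uniformly in $\ell_\alpha(X)$.
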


\bp
Suppose for contradiction that $w(\alpha)<\frac{\ell_{\alpha}(X)}{4}$. Then there exist two geodesic arcs $c'$ and $c''$ of length $w(\alpha)$ emanating perpendicularly from $\alpha$ such that they have the same endpoint $p_1 \in X$. The two starting points of $c'$ and $c''$ divide $\alpha$ into two arcs. We choose $\alpha_1$ to be the shorter one. In particular, we have $$\length(\alpha_1)\leq \frac{\ell_{\alpha}(X)}{2}.$$ 
For the geodesic triangle $\Delta_1:=c'\cup c''\cup \alpha_1$, we have
\beqar
\length(\Delta_1)&=&2w(\alpha)+\length(\alpha_1)\\
&<& 2\cdot \frac{\ell_{\alpha}(X)}{4}+\frac{\ell_{\alpha}(X)}{2} \\
&=&\ell_{\alpha}(X)=\lsys(X).
\eeqar
Which implies that $\Delta_1$ bounds a disk. On the other hand, the geodesic triangle $\Delta_1=c'\cup c''\cup \alpha_1$ has two interior right angles, in particular, the geodesic triangle $\Delta_1$ has total interior angles no less than $\pi$, which is a contradiction by Gauss-Bonnet.
\ep

The two lemmas above imply that
\begin{proposition}[Uniform Collar]\label{collar-uni}
For any $\alpha \subset X$ with $\ell_{\alpha}(X)=\lsys(X)$, then the width $w(\alpha)$ of $\alpha$ satisfies that
\bear
w(\alpha)\geq \max{\{\frac{1}{2} \ln \frac{\cosh{\frac{\ell_{\alpha}(X)}{2}}+1}{\cosh{\frac{\ell_{\alpha}(X)}{2}}-1}, \frac{\ell_{\alpha}(X)}{4}\}}.
\eear

\noindent Moreover, we have
\[w(\alpha)> \frac{1}{2}.\]
\end{proposition}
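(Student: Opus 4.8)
The plan is to combine the two lower bounds from Lemma \ref{collar-1} and Lemma \ref{collar-2} and then check that the resulting quantity exceeds $\frac{1}{2}$ in all cases. The first displayed inequality is immediate: both bounds $w(\alpha)\geq f(\ell_\alpha(X))$ and $w(\alpha)\geq \frac{\ell_\alpha(X)}{4}$ hold whenever $\alpha$ is a systolic curve (the first for any essential simple closed geodesic, the second by Lemma \ref{collar-2}), so $w(\alpha)$ is at least the maximum of the two. The content is in the ``Moreover'' clause.

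For the strict bound $w(\alpha)>\frac{1}{2}$, I would split into two ranges according to the size of $\ell:=\ell_\alpha(X)$. If $\ell\geq 2$, then the bound $w(\alpha)\geq \frac{\ell}{4}\geq \frac{1}{2}$ handles it, though one must be slightly careful to get a strict inequality; in fact for $\ell=2$ one should fall back on the collar estimate $f(2)=\frac12\ln\frac{\cosh 1+1}{\cosh 1-1}$, which is comfortably larger than $\frac12$, and for $\ell>2$ the estimate $\frac{\ell}{4}>\frac12$ is strict. If $\ell< 2$, I would use the collar function $f(\ell)=\frac12\ln\frac{\cosh(\ell/2)+1}{\cosh(\ell/2)-1}$, which is monotonically decreasing in $\ell$, so $f(\ell)>f(2)=\frac12\ln\frac{\cosh 1+1}{\cosh 1-1}$. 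It then remains to verify the elementary numerical inequality $\frac12\ln\frac{\cosh 1+1}{\cosh 1-1}>\frac12$, i.e. $\frac{\cosh 1+1}{\cosh 1-1}>e$. Since $\cosh 1\approx 1.543$, the left side is about $2.543/0.543\approx 4.68>e\approx 2.718$, so this holds with room to spare.

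Assembling these pieces: for every value of $\ell$ one of the two lower bounds already exceeds $\frac12$, so $w(\alpha)>\frac12$ unconditionally. I expect no real obstacle here — the only thing to be careful about is the behaviour at the threshold $\ell=2$, where one wants to make sure to invoke the collar-function bound rather than the linear bound so that the inequality stays strict, and the verification of the single numerical inequality $\frac{\cosh 1+1}{\cosh 1-1}>e$. Both are routine. The proof is therefore a short case analysis combining Lemma \ref{collar-1}, Lemma \ref{collar-2}, monotonicity of $f$, and one numerical estimate.
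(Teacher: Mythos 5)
Your proposal is correct and follows essentially the same route as the paper: combine Lemma \ref{collar-1} and Lemma \ref{collar-2} to get the stated maximum, then verify numerically that this maximum always exceeds $\tfrac12$. The only cosmetic difference is in that last verification — the paper locates the exact worst case by solving $\frac{1}{2}\ln\frac{\cosh(\ell/2)+1}{\cosh(\ell/2)-1}=\frac{\ell}{4}$ (equivalently $t^3-t^2-t-1=0$ with $t=e^{\ell/4}$), whereas you split at the threshold $\ell=2$ and check the single inequality $\frac{\cosh 1+1}{\cosh 1-1}>e$; both are valid and equally elementary.
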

\bp
It follows by Lemma \ref{collar-1} and \ref{collar-2} that
\[
w(\alpha)\geq \max{\{\frac{1}{2} \ln \frac{\cosh{\frac{\ell_{\alpha}(X)}{2}}+1}{\cosh{\frac{\ell_{\alpha}(X)}{2}}-1}, \frac{\ell_{\alpha}(X)}{4}\}}.
\]
The minimum of the right hand side holds only if $\frac{1}{2} \ln \frac{\cosh{\frac{\ell_{\alpha}(X)}{2}}+1}{\cosh{\frac{\ell_{\alpha}(X)}{2}}-1}=\frac{\ell_{\alpha}(X)}{4}$. If we set $t=e^{\frac{\ell_{\alpha}(X)}{4}}$, this is equivalent to solve the equation $t^3-t^2-t-1=0.$ A direct computation implies that the unique real solution is $t\sim 1.8392$. So we have
$$\max{\{\frac{1}{2} \ln \frac{\cosh{\frac{\ell_{\alpha}(X)}{2}}+1}{\cosh{\frac{\ell_{\alpha}(X)}{2}}-1}, \frac{\ell_{\alpha}(X)}{4}\}}\sim \ln {(1.8392)}\sim 0.5877>\frac{1}{2}.$$ 

The proof is complete.  
\ep

\subsection{\tec space}
Let $S_{g}$ be a closed surface of genus $g \ (g\geq 2)$. Let $\textsl{M}_{-1}$ be the space of Riemannian metrics on $S_{g}$ with constant curvature $-1$, and $X=(S_{g},\sigma(z)|dz|^2) \in \textsl{M}_{-1}$. The group $\Diff_+(S_g)$, which is the group of orientation-preserving diffeomorphisms of $S_g$, acts by pull back on $\textsl{M}_{-1}$. In particular this also holds for the normal subgroup $\Diff_0(S_g)$ of $\Diff_+(S_g)$, the group of diffeomorphisms 
isotopic to the identity. The group $\Mod(S_g):=\Diff_+(S_g)/\Diff_0(S_g)$ is called the \textsl{mapping class group} of $S_{g}$. 

The \textsl{Teichm\"uller space} $\sT_{g}$ of $S_{g}$ is defined as
\begin{equation}
\nonumber \sT_{g}:=M_{-1}/\Diff_0(S_g).  
\end{equation}

The \textsl{moduli space} $\sM_{g}$ of $S_{g}$ is defined as
\begin{equation}
\nonumber \sM_{g}:=\sT_{g}/\Mod(S_g).  
\end{equation}

The \tec space $\sT_{g}$ is a real analytic manifold. Let $\alpha \subset S_{g}$ be an essential simple closed curve, then for any $X \in \sT_{g}$, there exists a unique closed geodesic, still denoted by $\alpha$ in $X$, which represents for $\alpha$ in the fundamental group of $S_{g}$. The geodesic length function $\ell_{\alpha}(\cdot) $ defines a function on $\sT_{g}$. It is well-known that this function $\ell_{\alpha}(\cdot)$ is real-analytic on $\sT_{g}$. For more details on \tec theory, one may refer to \cite{Buser10, IT92, Hubbard06}.

Recall that the systole $\ell_{sys}(X)$ of $X\in \sT_g$ is the length of shortest closed geodesics on $X$. It defines a continuous function $\lsys(\cdot): \sT_{g}\to \mathbb{R}^+$, which is called the \textsl{systole function} on $\sT_g$. In general, the systole function is continuous but not smooth because of corners where there may exist multiple essential simple closed geodesics realizing the systole. This function is very useful in \tec theory. One may refer to \cite{Akr03, Gen15, Sch93} for more details.

\subsection{\wep metric}
Let $X=(S_g,\sigma(z)|dz|^2)\in \sT_g$. The tangent space $T_X\sT_g$ at $X$ is identified with the space of {\it harmonic Beltrami differentials} on $X$, i.e. forms on $X$ expressible as 
$\mu=\frac{\overline{\psi}}{\sigma}$ where $\psi \in H^0(X, K^2)$ is a holomorphic quadratic differential on $X$. Let $z=x+\textbf{i}y$ and $\dArea=\sigma(z)dxdy$ be the volume form. The \textit{Weil-Petersson metric} is the Hermitian
metric on $\sT_g$ arising from the the \textit{Petersson scalar  product}
\begin{equation}
 \left<\varphi,\psi\right>_{wp}= \int_X \frac{\varphi \cdot \overline{\psi}}{\sigma^2}\dArea\nonumber
\end{equation}
via duality. We will concern ourselves primarily with its Riemannian part $ds^2_{\WP}$. Throughout this paper we denote the Teichm\"uller space endowed with the Weil-Petersson metric by $\Teich(S_g)$. By definition it is easy to see that the mapping class group $\Mod(S_g)$ acts on $\Teich(S_g)$ as isometries. Thus, the \wep metric descends into a metric, also called the \wep metric, on the moduli space $\sM_g$. Throughout this paper we also denote by $\sM_g$ the moduli space endowed with the Weil-Petersson metric.

\subsection{Fenchel-Nielsen deformation}
For any essential simple closed curve $\alpha\subset S_{g}$, the geodesic length function $\ell_{\alpha}(\cdot)$ is real-analytic over $\sT_g$. Let $X= (S_g, \sigma(z)|dz|^2) \in \sM_g$ be a hyperbolic surface and $\Gamma$ be its associated Fuchsian group. Recall that we also let $\alpha$ denote its unique closed geodesic representative in $X$. One may denote by $A:z \to e^{\ell_\alpha(X)}\cdot z$ the deck transformation on the upper half plane $\mathbb{H}$ corresponding to the simple closed geodesic $\alpha \subset X$.
 
\begin{definition}
Associated to the geodesic $\alpha$, we define the following holomorphic quadratic differential on $X$ as
\begin{equation}
\Theta_{\alpha}(z):=\displaystyle \sum_{E\in \left<A\right>\backslash \Gamma} \frac{E'(z)^2}{E(z)^2}dz^2
\end{equation}
where $\left<A\right>$ is the cyclic group generated by $A$. 
\end{definition}

Let $\grad \ell_{\alpha}(X)\in T_X \sM_g$ be the \wep gradient of the geodesic length function $\ell_{\alpha}(\cdot)$ at $X$. By \cite{Wolpert82} it is known that $\grad \ell_{\alpha}=-2\textbf{i}\cdot t_\alpha$ where $t_{\alpha}$ is the infinitesimal Fenchel-Nielsen right twist deformation along $\alpha$. Moreover,  
\bear \label{grad-formula}
\ \ \ \ \grad \ell_{\alpha}(X)(z)=\frac{2}{\pi}\frac{\overline{\Theta}_{\alpha}(z)}{\rho(z)|dz|^2}= \frac{2}{\pi}\sum_{E\in \left<A\right>\backslash \Gamma} \frac{\overline{E}'(z)^2}{\overline{E}(z)^2 \rho(z)}\frac{d\overline{z}}{dz}\in T_X \sM_g
\eear
where $\rho(z)|dz|^2=\frac{|dz|^2}{(\Im(z))^2}$ is the hyperbolic metric on the upper half plane.

A special formula of Riera in \cite{Rie05} says that
\begin{equation}\label{Rie-f}
\langle\,\grad \ell_{\alpha},\grad \ell_{\alpha}\rangle_{wp}(X)=\frac{2}{\pi}\left(\ell_{\alpha}(X)+\displaystyle \sum_{E\in \{\left<A\right>\backslash\Gamma/\left<A\right>-id\}} (u \ln{\frac{u+1}{u-1}}-2)\right)
\end{equation}
where $u=\cosh{(\dist_{\mathbb{H}}(\tilde{\alpha}, E\circ \tilde{\alpha}))}$ and the double-coset of the identity element is omitted from the sum. In particular, one has
\begin{equation}\label{Rie-lb}
\langle\,\grad \ell_{\alpha},\grad \ell_{\alpha}\rangle_{wp}(X)>\frac{2}{\pi}\cdot \ell_{\alpha}(X).
\end{equation}

\begin{definition}
For any $p\in [1,\infty]$, $X \in \sM_g$ and $\mu\in T_X\sM_g$, we define the \emph{$L^p$-norm} $||\mu||_p$ of $\mu$ to be
\bear \label{def-mu-p}
||\mu||_p:=\left(\int_X |\mu|^p \cdot \dArea \right)^{\frac{1}{p}}.
\eear
\end{definition}

\noindent If $p=2$, this is the standard \wep norm. If $p=\infty$, $||\mu||_\infty=\max_{z \in X}|\mu(z)|$. One main goal of this paper is to study $||\grad \ell_{\alpha}(X)||_p$ when $\alpha \subset X$ is a systolic curve.

\subsection{Weil-Petersson curvatures.} The curvature tensor of the \wep metric on $\sM_g$ is given as follows. Let $\mu_{i},\mu_{j}$ be two elements in the tangent space $T_X\sM_g$ at $X$, so that the metric tensor might be written in local coordinates as
\begin{eqnarray*}
g_{i \overline{j}}=\int_X \mu_{i} \cdot  \overline{\mu_j} \dArea.
\end{eqnarray*} 

For the inverse of $(g_{i\overline{j}})$, we use the convention
\begin{eqnarray*}
g^{i\overline{j}} g_{k\overline{j}}=\delta_{ik}.
\end{eqnarray*}

Then the curvature tensor is given by
\begin{eqnarray*}
R_{i\overline{j}k\overline{l}}=\frac{\partial^2}{\partial t^{k}\partial \overline{t^{l}}}g_{i\overline{j}}-g^{s\overline{t}}\frac{\partial}{\partial t^{k}}g_{i\overline{t}}\frac{\partial}{\partial \overline{t^{l}}}g_{s\overline{j}}.
\end{eqnarray*}

The following curvature formula was established in \cite{Tromba86, Wolpert86}. It has been applied to study various curvature properties of the Weil-Petersson metric. Wolpert \cite{Wolpert86} and Tromba \cite{Tromba86} independently showed that $\sM_g$ has negative sectional curvature. In \cite{Schu86} Schumacher showed that $\Teich(S_g)$ has strongly negative curvature in the sense of Siu. Liu-Sun-Yau in \cite{LSY08} showed that $\sM_g$ has dual Nakano negative curvature, which says that the complex curvature operator on the dual tangent bundle is positive in some sense. It was shown in \cite{W14-co} that the $\sM_g$ has non-positive definite Riemannian curvature operator. One can also see \cite{Huang05, Huang07, Huang07-a, LSY04, MZhu17-wp, Teo09, Wolpert11, Wolpert12, W17-wpc, Wolf-W-1} for other aspects of the \wep curvatures of $\sM_g$.

Set $D=-2(\Delta-2)^{-1}$ where $\Delta$ is the Beltrami-Laplace operator on $X=(S_g,\sigma(z)|dz|^2) \in \sM_g$. The operator $D$ is positive and self-adjoint. 
\begin{theorem}[Tromba, Wolpert]\label{cfow} 
The curvature tensor satisfies
\[R_{i\overline{j}k\overline{l}}=\int_{X} D(\mu_{i}\mu_{\overline{j}})\cdot (\mu_{k}\mu_{\overline{l}}) \dArea+\int_{X} D(\mu_{i}\mu_{\overline{l}})\cdot (\mu_{k}\mu_{\overline{j}}) \dArea.\]
\end{theorem}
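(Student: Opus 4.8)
The plan is to reproduce the classical deformation-theoretic computation of Ahlfors, Wolpert and Tromba, obtaining the formula by differentiating the metric tensor $g_{i\overline{j}}=\int_X\mu_i\overline{\mu_j}\,\dArea$ twice in suitable holomorphic coordinates on $\sT_g$; this suffices since $\sT_g\to\sM_g$ is a local isometry. Fix $X_0=(S_g,\sigma_0|dz|^2)$ and a basis $\mu_1,\dots,\mu_n$, $n=3g-3$, of harmonic Beltrami differentials of $X_0$. Following Ahlfors, parametrise nearby complex structures on $S_g$ by the Beltrami coefficient $\mu(t)=\sum_k t^k\mu_k$; for $t$ small this gives holomorphic coordinates centred at $X_0$ with $\partial/\partial t^k\big|_0=\mu_k$. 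Ahlfors showed that in these coordinates all first derivatives of $g_{i\overline{j}}$ vanish at $t=0$ — equivalently, that they are K\"ahler normal coordinates for the \wep metric, a reflection of the fact that a harmonic Beltrami differential produces no first-order change in the hyperbolic metric. Hence the quadratic term in $R_{i\overline{j}k\overline{l}}=\partial_k\partial_{\overline{l}}g_{i\overline{j}}-g^{s\overline{t}}(\partial_kg_{i\overline{t}})(\partial_{\overline{l}}g_{s\overline{j}})$ vanishes at $X_0$, and it remains to compute $\partial_k\partial_{\overline{l}}g_{i\overline{j}}(0)$.

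The key input is the second variation of the hyperbolic area form. Pull the hyperbolic metric of $X_{\mu(t)}$ back to the fixed smooth surface $S_g$ and write its area density, relative to $\dArea_0$, as $e^{2u(t,\overline{t})}\,(1-|\mu(t)|^2)$, with $u(0)=0$. The condition that this metric have curvature $-1$ is a semilinear elliptic equation; its first-order linearisation reproves $\partial_ku(0)=0$, and its second-order linearisation in the mixed direction shows that $(\Delta-2)\big(\partial_k\partial_{\overline{l}}u\big)$ is a fixed bilinear expression in $\mu_k$ and $\overline{\mu_l}$. Adding the elementary contribution of the factor $1-|\mu(t)|^2$ and solving, one finds that the second variation of the area density equals $D(\mu_k\overline{\mu_l})\,\dArea_0$, where $D=-2(\Delta-2)^{-1}$ is the positive self-adjoint operator already introduced.

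Now write $g_{i\overline{j}}(t)=\int_{S_g}\widetilde{\mu_i(t)}\,\overline{\widetilde{\mu_j(t)}}\cdot(\text{area density})$, where $\widetilde{\mu_i(t)}$ denotes the harmonic Beltrami representative of $\partial/\partial t^i$ at $X_{\mu(t)}$, pulled back to $S_g$, with $\widetilde{\mu_i(0)}=\mu_i$. Applying $\partial_k\partial_{\overline{l}}$ at $t=0$ leaves two surviving families of terms. First, both derivatives land on the area density; since $\partial_ku(0)=0$, only the genuine second variation contributes, giving $\int_XD(\mu_k\overline{\mu_l})\,\mu_i\overline{\mu_j}\,\dArea=\int_XD(\mu_i\overline{\mu_j})\,\mu_k\overline{\mu_l}\,\dArea$ by the self-adjointness of $D$; this is the first term of the formula. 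Second, $\partial_k$ lands on $\widetilde{\mu_i(t)}$ and $\partial_{\overline{l}}$ on $\overline{\widetilde{\mu_j(t)}}$: the harmonic representative of a coordinate vector field moves along the family, and projecting back onto harmonic Beltrami differentials — orthogonally, via $\overline{\bdry}$ and its formal adjoint for the varying metric — forces a correction whose value at $t=0$ can be written down explicitly; pairing two such corrections and integrating by parts produces a second factor $(\Delta-2)^{-1}=-\frac{1}{2}D$ and collapses the contribution to $\int_XD(\mu_i\overline{\mu_l})\,\mu_k\overline{\mu_j}\,\dArea$, the second term. Summing the two yields the stated identity. (Tromba obtains the same formula from the second variation of the Dirichlet energy of harmonic maps between surfaces.)

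The main obstacle is this second family of terms. Because the quasiconformal maps $f^{\mu(t)}$ depend only real-differentiably on $z$ and non-holomorphically on $t$, controlling $\partial_k\widetilde{\mu_i(t)}\big|_0$ — the first variation of the harmonic representative of a coordinate vector field — is delicate, and one must verify that the contraction obtained is precisely $\mu_i\overline{\mu_l}\cdot\mu_k\overline{\mu_j}$ and not some other pairing of the four indices. The cleanest organisation, following Wolpert, is to work throughout with automorphic forms on the universal cover and to use the reproducing property of $D$ (equivalently of $(\Delta-2)^{-1}$) against products of harmonic Beltrami differentials; once that is in place, the remaining work — most notably pinning down the numerical constants so that no spurious factor survives — is routine.
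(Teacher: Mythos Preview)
The paper does not prove this theorem at all: it is quoted as an established result of Tromba and Wolpert, with citations to their original papers, and is then used as a black box in the subsequent curvature estimates. There is therefore no ``paper's own proof'' to compare your proposal against.

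That said, your sketch does follow the line of the original arguments (Ahlfors' normal coordinates, vanishing of first derivatives of $g_{i\overline{j}}$, second variation of the hyperbolic area form producing the operator $D=-2(\Delta-2)^{-1}$, and the self-adjointness of $D$ giving the symmetric form of the two terms). The outline is accurate at the level of strategy. Where it remains a genuine sketch rather than a proof is exactly where you flag it: the treatment of $\partial_k\widetilde{\mu_i(t)}|_0$ and the claim that the cross term ``collapses to $\int_X D(\mu_i\overline{\mu_l})\,\mu_k\overline{\mu_j}\,\dArea$'' are asserted rather than computed, and in the actual Wolpert argument this step requires a careful identity relating the variation of the harmonic projection to the Green's operator for $\Delta-2$. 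If you intend this as more than a pointer to the literature, that step needs to be written out; if you intend it as a summary with attribution, it is fine as is.
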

\subsubsection{Weil-Petersson holomorphic sectional curvatures.}\label{subsec:hol-sec-curv}
Recall that a holomorphic sectional curvature is a sectional curvature along a holomorphic line. Let $\mu \in T_X\sM_g$. Then Theorem \ref{cfow} tells that the \wep holomorphic sectional curvature $\HolK(\mu)$ along the holomorphic line spanned by $\mu$ is
\[\HolK(\mu)=\frac{-2\cdot \int_{X} D(|\mu|^2)\cdot (|\mu|^2) \dArea}{||\mu||_{\WP}^4}.\]
Assume that $||\mu||_{\WP}=1$. By the Cauchy-Schwarz inequality and an estimation of Wolf in \cite{Wolf12} we know that \cite[Proposition 2.7]{Wolf-W-1} 
\begin{equation}\label{HolK-eq}
-2 \int_{X} |\mu|^4 \dArea \leq \HolK(\mu)\leq -\frac{2}{3} \int_{X} |\mu|^4 \dArea.
\end{equation}

\subsubsection{Weil-Petersson sectional curvatures.}\label{subsec:sec-curv}
Let $\mu_i, \mu_j \in T_X\sM_g$ be two orthogonal tangent vectors with $||\mu_i||_{\WP}=||\mu_j||_{\WP}=1$. Then the sectional curvature $K(\mu_i,\mu_j)$ of the plane spanned by the real vectors corresponding to $\mu_i$ and $\mu_j$ is \cite{Wolpert86}
\beqar
K(\mu_i,\mu_j)&=&\Re{\int_X D(\mu_i \mu_{\overline{j}})\mu_i \mu_{\overline{j}}\dArea}-\frac{1}{2}\int_X D(\mu_i \mu_{\overline{j}})\mu_{\overline{i}}\mu_j\dArea \\
&&-\frac{1}{2}\int_X D(|\mu_i|^2)|\mu_j|^2\dArea.
\eeqar
Apply \cite[Lemma 4.3]{Wolpert86} and Cauchy-Schwarz inequality one may have
\beqar
\int_X D(|\mu_i||\mu_j|)|\mu_i||\mu_j|\dArea &\leq& \int_X D(|\mu_i|^2)^{\frac{1}{2}} D(|\mu_j|^2)^{\frac{1}{2}}|\mu_i||\mu_j|\dArea\\
&\leq & (\int_X D(|\mu_i|^2)|\mu_j|^2\dArea)^{\frac{1}{2}}\\
&& \times (\int_X D(|\mu_j|^2)|\mu_i|^2\dArea)^{\frac{1}{2}} \\
&=& \int_X D(|\mu_i|^2)|\mu_j|^2\dArea.
\eeqar
Where in the last inequality we apply the fact that the operator $D$ is self-adjoint. It is clear that $$|\int_X D(\mu_i \mu_{\overline{j}})\mu_i \mu_{\overline{j}}\dArea|\leq \int_X D(|\mu_i||\mu_j|)|\mu_i||\mu_j|\dArea$$ and $$|\int_X D(\mu_i \mu_{\overline{j}}) \mu_{\overline{i}}\mu_j\dArea| \leq \int_X D(|\mu_i||\mu_j|)|\mu_i||\mu_j|\dArea.$$ Then one may have the following bound,
\begin{equation}\label{SecK-eq}
K(\mu_i, \mu_j)\geq -2\int_X D(|\mu_i|^2)|\mu_j|^2\dArea.
\end{equation}

\subsubsection{Weil-Petersson Ricci curvatures.}\label{subsec:ric-curv}
Let $\{\mu_{i}\}_{i=1}^{3g-3}$ be a holomorphic orthonormal basis of $T_X\sM_g$. Then the Ricci curvature $\Ric(\mu_i)$ of $\sM_g$ at $X$ in the direction $\mu_{i}$ is given by
\begin{eqnarray*}
&&\Ric(\mu_{i})=-\sum_{j=1}^{3g-3}R_{i\overline{j}j\overline{i}}\\
&=&-\sum_{j=1}^{3g-3}(\int_{X} D(\mu_{i}\mu_{\overline{j}})\cdot (\mu_{j}\mu_{\overline{i}}) \dArea+\int_{X} D(|\mu_{i}|^2)\cdot (|\mu_{j}|^2) \dArea).
\end{eqnarray*}
Since $\int_X D(f)\cdot \overline{f} \dArea \geq 0$ for any function $f$ on $X$, by applying the argument in the proof of \eqref{SecK-eq} one may have
\begin{equation}\label{RicK-eq}
-2\leq \frac{\Ric(\mu_i)}{\sum_{j=1}^{3g-3}\int_{X} D(|\mu_{i}|^2)\cdot (|\mu_{j}|^2)\dArea} \leq -1.
\end{equation}


\section{A upper bound for $\grad \ell_\alpha(X)$}\label{s-uup-elem}
In this section we provide an elementary upper bound $H(z)$ for $|\grad \ell_\alpha(X)(z)|$ and detect the region where $H(z)$ attains its maximal value.

Let $\alpha \subset X\in \sM_g$ be an essential non-trivial loop. Up to conjugacy, one may assume that the closed geodesic, still denoted by $\alpha$, representing $\alpha$ corresponds to the deck transformation $A: z\to e^{\ell_{\alpha}(X)}\cdot z$ with axis $\tilde{\alpha}=\textbf{i}\mathbb{R}^+$ which is the imaginary axis, and the fundamental domain $\mathcal{A}_\alpha$ with respect to this cyclic group $\left<A\right>$ is 
\bear
\mathcal{A}_\alpha=\{z\in \mathbb{H}; 1\leq |z|\leq e^{\ell_{\alpha}(X)}\}.
\eear

Recall that the \wep gradient of $\ell_\alpha(\cdot)$ at $X$ is \cite{Gar75, Wolpert82} $$\grad \ell_{\alpha}(X)(z)=\frac{2}{\pi}\sum_{E\in \left<A\right>\backslash \Gamma} \frac{\overline{E}'(z)^2}{\overline{E}(z)^2 \rho(z)}\frac{d\overline{z}}{dz} \in T_X \sM_g$$ where $\rho(z)=\frac{1}{\Im(z)^2}$. Since $\rho(\gamma (z)) |\gamma'(z)|^2=\rho(z)$ for any $\gamma \in \Aut(\mathbb{H})$, the triangle inequality gives that for all $z\in \mathcal{A}_\alpha$,
\begin{equation}\label{1}
|\grad \ell_{\alpha}(X)|(z)\leq \frac{2}{\pi} \sum_{E\in \left<A\right>\backslash\Gamma} \frac{1}{|E(z)|^2}\times \frac{1}{\rho(E(z))}.
\end{equation}

Set 
\bear \label{H-expr}
H(z):&=&\sum_{E\in \left<A\right>\backslash \Gamma} \frac{1}{|E(z)|^2}\times \frac{1}{\rho(E(z))}\\
&=&\sum_{E\in \left<A\right>\backslash \Gamma} \sin^2 (\theta(E(z))) \nonumber
\eear
where we write $E(z)=(r(E(z)),\theta(E(z)))$ in the polar coordinate of $\H$. Since $H(\gamma \cdot z)=H(z)$ for any $\gamma \in \Gamma$, it descends into a function on $X$ $$h:X \to \R^{>0}$$ defined as $$h(\mathcal{\pi}(z)):=H(z)$$ where $\mathcal{\pi}:\H \to X$ is the covering map and $z \in \mathbb{H}$ is any lift point of $\mathcal{\pi}(z)\in X$.

By \eqref{1} we know that
\be \label{2-3-0-0}
|\grad \ell_{\alpha}(X)|(\mathcal{\pi}(z)) \leq \frac{2}{\pi}\cdot H(z).
\ene

Now we estimate $\max\limits_{z\in \H} H(z)$ or $\max\limits_{z\in X}h(z)$. First we compute the Laplacian of $H(z)$ to detect the rough region where $\max_{z\in \H} H(z)$ holds.

We rewrite $\frac{1}{|E(z)|^2}\times \frac{1}{\rho(E(z))}$ as
\[f(z)=\frac{1}{4}\times \frac{(E(z)-\overline{E}(z))(\overline{E}(z)-E(z))}{E(z)\times \overline{E}(z)}.\]

\noindent Note that $E(z)$ is analytic, i.e., $E_{\overline{z}}(z)=0$. So we have
\begin{eqnarray}
\frac{\partial}{\partial z}f(z)&=&\frac{1}{4}\times ( \frac{2E_z(z)\cdot (\overline{E}(z)-E(z))}{E(z)\times \overline{E}(z)}\\
&-&\frac{(E(z)-\overline{E}(z))\cdot (\overline{E}(z)-E(z))}{E^2(z)\times \overline{E}(z)}\cdot E_z(z) ) \nonumber\\
&=&\frac{1}{4}\frac{(E(z)+\overline{E}(z))\cdot (\overline{E}(z)-E(z))}{E^2(z)\times \overline{E}(z)}\cdot E_z(z).\nonumber
\end{eqnarray}

\noindent Take one more derivative we get,
\begin{eqnarray}\label{2}
4\frac{\partial^2}{\partial \overline{z}\partial z}f(z)&=& \frac{\overline{E}(z)-E(z)}{E^2(z)\times \overline{E}(z)}\cdot E_z(z)\cdot  \overline{E}_{\overline{z}}(z) \\
&+& \frac{E(z)+\overline{E}(z)}{E^2(z)\times \overline{E}(z)}\cdot E_z(z)\cdot  \overline{E}_{\overline{z}}(z) \nonumber  \\ \nonumber
&-&\frac{(E(z)+\overline{E}(z))\cdot (\overline{E}(z)-E(z))}{E^2(z)\times \overline{E}^2(z)}\cdot E_z(z)\cdot \overline{E}_{\overline{z}}(z) \\
&=&\frac{|E_z(z)|^2}{|E(z)|^4}\times ( E^2(z)+\overline{E}^2(z)) .\nonumber
\end{eqnarray}

\noindent Let $E(z)=\Re{E(z)}+\textbf{i}\cdot \Im{E(z)}$. Then 
\be \label{2-3-0}
\frac{\partial^2}{\partial \overline{z}\partial z}f(z)=\frac{|E_z(z)|^2}{2|E(z)|^4}\times \left((\Re{E})^2-(\Im{E})^2\right). 
\ene

\noindent Recall that for $z=(r,\theta) \in \mathbb{H}$ given in the polar coordinate where $\theta \in (0,\pi)$, the hyperbolic distance between $z$ and the imaginary axis $\textbf{i}\mathbb{R^+}$ is given in \eqref{i-dis} saying
\begin{eqnarray*}
\dist_{\mathbb{H}}(z, \textbf{i}\mathbb{R^+})=\ln|\csc{\theta}+|\cot{\theta}||.
\end{eqnarray*}

\noindent Consider the set $$\mathcal{P}_\alpha=\{(x,y)\in \mathbb{H}; \ y\geq |x|\} \cap \{(r,\theta); \ 1\leq r \leq \ell_{\alpha}(X)\}.$$ Geometrically, $\mathcal{P}_\alpha$ is the set in $\H$ which has distance to the imaginary axis with
\[\dist_{\mathbb{H}}(z, \textbf{i}\mathbb{R^+})\leq \ln(\sqrt{2}+1).\]

\noindent Equation (\ref{2-3-0}) tells that if $E(z) \notin \mathcal{P}_\alpha$, then we have
\bear
\Delta f(z) \geq 0.
\eear

Recall that $\alpha$ is the closed geodesic representing for $\alpha$ in $X$ which is lifted into $\H$ as $\{\textbf{i}\cdot t\}$ where $1\leq t \leq e^{\ell_\alpha(X)}$. For any $\mathcal{\pi}(z) \in X$ with $\dist(\pi(z),\alpha)>\ln(\sqrt{2}+1)$, we have that for any lift $z \in \mathbb{H}$ of $\mathcal{\pi}(z) \in X$ and any $E \in \Gamma$, 
\[\dist_{\mathbb{H}}(E(z), \textbf{i}\mathbb{R^+})> \ln(\sqrt{2}+1).\]
Therefore equation (\ref{2-3-0}) implies that  for any $z \in \mathbb{H}$ with $\dist(\mathcal{\pi}(z), \alpha)> \ln(\sqrt{2}+1)$ we have
\begin{eqnarray}\label{2-1}
\Delta H(z)=\sum_{E\in \left<A\right>\backslash \Gamma} \frac{|E_z(z)|^2}{4|E(z)|^4}\times ( E^2(z)+\overline{E}^2(z))>0.
\end{eqnarray}

\noindent Recall that $h(\mathcal{\pi}(z))=H(z)$ for any $z \in \H$. The following proposition tells that the maximum of $h$ can only happen near the central closed geodesic. More precisely,

\begin{proposition}\label{p-1} We have
\[\max_{\pi(z) \in X} h(\pi(z))=\max_{\pi(z) \in X;\ \dist(\pi(z), \alpha)\leq \ln(\sqrt{2}+1)} h(\pi(z)).\]
\end{proposition}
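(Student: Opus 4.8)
The plan is to run a maximum principle argument on the region far from the core geodesic, using the positivity of the Laplacian established in \eqref{2-1}. Since $X$ is a closed surface and $h$ is smooth on $X$ — the series \eqref{H-expr} converges locally uniformly together with all its derivatives, as it represents (a multiple of) the harmonic Beltrami differential $\grad\ell_\alpha$, which is the same convergence already used to differentiate termwise in \eqref{2} and \eqref{2-1} — the continuous function $h$ attains its maximum at some point $q_0\in X$.

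First I would set $U:=\{\pi(z)\in X:\ \dist(\pi(z),\alpha)>\ln(\sqrt{2}+1)\}$, an open subset of $X$ whose complement is exactly $\{\pi(z)\in X:\ \dist(\pi(z),\alpha)\leq\ln(\sqrt{2}+1)\}$, and recall from \eqref{2-1} that $\Delta h>0$ at every point of $U$. (If $U=\emptyset$ there is nothing to prove, so assume $U\neq\emptyset$.) Next I would show $q_0\notin U$: if $q_0$ lay in the open set $U$, it would be an interior local maximum of the smooth function $h$, so the Hessian of $h$ at $q_0$ would be negative semi-definite, forcing $\Delta h(q_0)\leq 0$ and contradicting $\Delta h(q_0)>0$. (Equivalently, $h$ is strictly subharmonic on $U$ and hence has no interior maximum on $\overline U$; its maximum over $\overline U$ is taken on $\partial U$, which lies in $X\setminus U$.) Therefore $q_0\in X\setminus U$, i.e. $\dist(q_0,\alpha)\leq\ln(\sqrt{2}+1)$, which yields
\[
\max_{\pi(z)\in X}h(\pi(z))=h(q_0)\leq \max_{\pi(z)\in X;\ \dist(\pi(z),\alpha)\leq\ln(\sqrt{2}+1)}h(\pi(z)),
\]
and the reverse inequality is immediate since the right-hand maximum is taken over a subset of $X$.

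There is no substantial obstacle here; the argument is a direct application of the second-derivative test for a smooth function at an interior maximum, combined with the sign computation \eqref{2-3-0}--\eqref{2-1}. The only point requiring a word of justification is the smoothness of $h$ (so that the pointwise Hessian test is legitimate), which follows from the locally uniform convergence of \eqref{H-expr} and its derivatives, exactly as in the derivation of \eqref{2-1}.
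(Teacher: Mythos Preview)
Your proof is correct and follows essentially the same maximum-principle argument as the paper: assume the maximum occurs at a point with $\dist(\cdot,\alpha)>\ln(\sqrt{2}+1)$, use \eqref{2-1} to get $\Delta>0$ there, and contradict the second-derivative test at an interior maximum. The only cosmetic difference is that the paper lifts the maximum point to $\mathbb{H}$ and works with $H$, whereas you stay on $X$ and work with $h$; since $\pi$ is a local isometry these are equivalent.
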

\bp
Suppose for contradiction that there exists a point $\pi(w) \in X$ with $\dist(\pi(w), \alpha)>\ln(\sqrt{2}+1)$ such that
\[h(\pi(w))=\max_{\pi(z) \in X} h(\pi(z)).\]
Let $w\in \mathbb{H}$ be a lift of $\pi(w) \in X$. Then we have
\[H(w)=\max_{z \in \mathbb{H}}H(z).\]
In particular, by the maximal principal we have
\bear
\Delta H(w) \leq 0.
\eear
On the other hand, since $\dist(\pi(w), \alpha)>\ln(\sqrt{2}+1)$, for any $E\in \left<A\right>\backslash\Gamma$ we have
\[\dist_{\mathbb{H}}(E\circ w ,\alpha)>\ln(\sqrt{2}+1).\]
Then it follows by \eqref{2-1} that
\bear
\Delta H(w) > 0
\eear
which is a contradiction. The proof is complete.
\ep

\br
The proposition above tells that the maximum of $H$ can only happen in the $ \ln(\sqrt{2}+1)-$neighborhood of $\cup_{E\in \left<A\right>/\Gamma}E\circ \textbf{i}\mathbb{R^+}$, which is also the same as $\cup_{E\in \left<A\right>/\Gamma} E\circ \mathcal{P}_\alpha$. 
\er

\section{Uniform upper bounds for $L^\infty$-norms}\label{s-uup-infi}
In this section we will prove the uniform upper bounds in Theorem \ref{s-ub-lp} for $p=\infty$. Part (1) and (2) of Theorem \ref{s-ub-lp} in this case will be proved separately. We first show  
\begin{proposition}\label{uub-infi}
Let $X\in \sM_g$ and $\alpha \subset X$ with $\ell_{\alpha}(X)=\lsys(X)$. Then 
\bear \label{uub-lin}
||\grad \ell_{\alpha}(X)||_\infty \prec 1. \nonumber
\eear
\end{proposition}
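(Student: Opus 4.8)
The plan is to bound $H(z) = \sum_{E\in \langle A\rangle\backslash\Gamma} \sin^2(\theta(E(z)))$ uniformly, using Proposition \ref{p-1} to reduce to the region near the central geodesic $\alpha$, and then the mean value property (Lemma \ref{mvp}) together with a packing argument in the collar provided by Proposition \ref{collar-uni}. First I would invoke Proposition \ref{p-1} to reduce to estimating $h(\pi(z))$ for $\pi(z)\in X$ with $\dist(\pi(z),\alpha)\leq \ln(\sqrt{2}+1)$; fix such a point and a lift $z\in\H$. The key observation is that $\sin^2(\theta(w)) = \Im^2(w)/|w|^2$ is comparable, by \eqref{i-exp}, to $e^{-2\dist_\H(w,\mathbf{i}\R^+)}$, so $H(z)$ is comparable to $\sum_{E} e^{-2\dist_\H(E(z),\mathbf{i}\R^+)}$. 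Each term satisfies the mean value property, so by Lemma \ref{mvp} with a fixed small radius $r=r_0$ (to be chosen, say $r_0 = \tfrac14$), we have $e^{-2\dist_\H(E(z),\mathbf{i}\R^+)} \leq c(r_0)\int_{B_\H(E(z);r_0)} e^{-2\dist_\H(w,\mathbf{i}\R^+)}\,\dArea$.

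The heart of the argument is to sum these integrals and control the overlap. The balls $B_\H(E(z);r_0)$ for $E$ ranging over coset representatives need not be disjoint in $\H$, but I would instead push everything down to $X$: the function $w\mapsto e^{-2\dist_\H(w,\mathbf{i}\R^+)}$ is $\langle A\rangle$-invariant (since $A$ preserves $\mathbf{i}\R^+$), so it descends to a function on the cylinder $\langle A\rangle\backslash\H$, and the sum over $\langle A\rangle\backslash\Gamma$ of the translated balls projects to a collection of balls of radius $r_0$ in this cylinder all containing the image point, hence all contained in the ball of radius $2r_0$ about it. Provided $2r_0 < w(\alpha)$ — which holds since $w(\alpha) > \tfrac12$ by Proposition \ref{collar-uni} and we may take $r_0 = \tfrac14$ — this ball of radius $2r_0$ embeds in $X$, and the multiplicity with which the $B_\H(E(z);r_0)$ cover a given point of $\langle A\rangle\backslash\H$ is bounded by a universal constant $N$ depending only on $r_0$ (a standard volume/packing bound: disjointly supported unit balls inside a ball of radius $2r_0 + r_0$). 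Therefore $H(z) \prec N\cdot c(r_0)\int_{\langle A\rangle\backslash\H} e^{-2\dist_\H(w,\mathbf{i}\R^+)}\,\dArea$, and this last integral over the full cylinder is an explicit elementary integral that evaluates to a finite universal constant (independent of $\ell_\alpha(X)$, since integrating $\Im^2(w)/|w|^2$ against the hyperbolic area form over $\{1\le|w|\le e^{\ell_\alpha}\}$ in polar coordinates separates into $\ell_\alpha \cdot \int_0^\pi \sin^2\theta\, d\theta / \sin^2\theta$-type expressions — actually one gets a multiple of $\ell_\alpha$; see below).

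Here I should be careful: the naive integral $\int_{\mathcal A_\alpha} \sin^2\theta \,\dArea$ grows linearly in $\ell_\alpha(X)$, so the crude bound above only gives $H(z) \prec \ell_\alpha(X)$, which is not what we want when $\alpha$ is long. The fix, and the main obstacle, is that we do \emph{not} integrate over the whole cylinder: the balls $B_\H(E(z);r_0)$ that contribute all lie within hyperbolic distance $\ln(\sqrt2+1) + r_0$ of \emph{some} $\Gamma$-translate of $\mathbf{i}\R^+$, and once we have fixed that $\pi(z)$ is within $\ln(\sqrt2+1)$ of $\alpha$, the relevant translates $E\circ\mathbf{i}\R^+$ passing near $z$ are controlled — this is exactly where the systole hypothesis enters. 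Because $\ell_\alpha(X) = \lsys(X)$, distinct axes $E\circ\mathbf{i}\R^+$ are kept apart (the collar of width $\geq \tfrac14\lsys(X) \geq$ a definite amount, combined with the lower bound $w(\alpha) > \tfrac12$), so only a bounded number of distinct translated strips come within a bounded distance of $z$, and on each the sum of $\sin^2\theta$ over the relevant subfamily of cosets is bounded — precisely, one reorganizes $\langle A\rangle\backslash\Gamma$ by the double cosets $\langle A\rangle\backslash\Gamma/\langle A\rangle$, each double coset contributing a strip around one axis, and uses the collar embedding to bound each strip's contribution by a universal constant while bounding the number of non-negligible strips by a universal constant (a term $e^{-2\dist_\H(z, E\circ\mathbf{i}\R^+)}$-weighted count, summable geometrically). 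Assembling: $H(z) \prec 1$ uniformly, hence by \eqref{2-3-0-0}, $\|\grad\ell_\alpha(X)\|_\infty \leq \tfrac{2}{\pi}\max_z H(z) \prec 1$, which is the claim. I expect the bookkeeping of the double-coset decomposition and the geometric-series estimate over strips — making precise that "only boundedly many strips matter and each contributes $O(1)$" using only the collar width $> \tfrac12$ and the systole property — to be the technical crux.
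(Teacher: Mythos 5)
Your setup (reduce to the $\ln(\sqrt2+1)$-neighborhood of $\alpha$ via Proposition \ref{p-1}, convert $\sin^2\theta$ into $e^{-2\dist_{\H}(\cdot,\textbf{i}\R^+)}$ via \eqref{i-exp}, apply the mean value Lemma \ref{mvp}, and unfold) is exactly the paper's framework, but there are gaps at both of the places where the real work happens. First, the claimed universal multiplicity bound $N=N(r_0)$ is not justified: a ball of radius $2r_0=\tfrac12$ centered at a point within $\ln(\sqrt2+1)$ of $\alpha$ does \emph{not} embed in $X$ when $\lsys(X)$ is small (the injectivity radius there is only $\asymp \ell_\alpha(X)$ by the Collar Lemma), and the total multiplicity of the unfolded balls is genuinely of order $1/\ell_\alpha(X)$, coming from the powers of $A$ itself. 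The paper (Lemma \ref{uub-small}) accepts this multiplicity $\asymp 1/\inj(\pi(z_0))\asymp 1/\ell_\alpha(X)$ and observes that it is exactly cancelled by $\int_{\mathcal{A}_\alpha}\sin^2\theta\,\dArea=\pi\ell_\alpha(X)$. Your (unjustified) constant $N$ would also yield $H\prec \ell_\alpha(X)\prec 1$ in the thin case, so that case survives, but the packing argument as written is not correct.

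Second, and more seriously, the large-systole case --- which you rightly identify as the obstacle, since the crude bound only gives $H\prec\lsys(X)$ --- is left as an unexecuted sketch. The reorganization by double cosets $\langle A\rangle\backslash\Gamma/\langle A\rangle$ into ``strips,'' with the claims that only boundedly many strips matter and that each contributes $O(1)$, is precisely the content that needs proof; neither claim is obvious (the number of double cosets whose axis passes within a fixed distance of $\tilde\alpha$ is controlled by an area count that a priori depends on $\lsys(X)$, not just on that distance), and this route drifts toward re-deriving Riera-type estimates that the paper deliberately avoids. The paper's resolution (Lemma \ref{uub-large}) is more elementary: by the systole--collar Lemma \ref{collar-2}, $w(\alpha)\geq\lsys(X)/4$, so every translate $E(z_0)$ with $E\notin\langle A\rangle$ lies at distance $\geq\lsys(X)/4$ from $\textbf{i}\R^+$, hence all the unfolded balls sit inside the region $\{\sin\theta\leq 16/\lsys(X)\}$ of $\mathcal{A}_\alpha$, whose hyperbolic area is $\asymp\lsys(X)\cdot\arcsin\bigl(\min\{16/\lsys(X),1\}\bigr)\asymp 1$; disjointness of the unfolded balls is free here because $\inj\geq\lsys(X)/2$ is bounded below. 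You have the needed ingredient (Proposition \ref{collar-uni}) on the table, but the step that actually defeats the linear factor $\lsys(X)$ --- restricting the angular range of integration rather than recounting cosets --- is missing.
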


The proof of Proposition \ref{uub-lin} is splitted into two parts. We first deal with the case that $X$ is always contained in certain fixed thick part of the moduli space $\sM_g$.

\bl \label{uub-large}
For any given constant $\eps_0>0$ and $\alpha \subset X \in \sM_g$ with $\ell_{\alpha}(X)=\lsys(X) \geq \eps_0$, then there exists a uniform constant $C_1(\eps_0)>0$, only depending on $\eps_0$, such that
\[||\grad \ell_{\alpha}(X)||_\infty\leq C_1(\eps_0).\]
\el

\bp
By \eqref{2-3-0-0} it suffices to show that \[\max_{z \in \mathbb{H}}H(z)\leq C'_1(\eps_0)\]
for some uniform constant $C'_1(\eps_0)>0$ only depending on $\eps_0$.

Let $z_0\in \mathcal{F}_\alpha \subset \mathcal{A}_{\alpha}$, where $\mathcal{F}_\alpha$ is the fundamental domain of $X$ which contains the lift $\{\textbf{i}\cdot t\}_{1\leq t \leq e^{\ell_\alpha(X)}}$ of the shortest closed geodesic $\alpha$ in $X$, such that $H$ attains its maximum at $z_0$. That is,
\[H(z_0)=\max_{z \in \mathbb{H}}H(z).\]

\noindent Since $\lsys(X)\geq \eps_0$, it follows by the triangle inequality that for any $\gamma_1\neq \gamma_2 \in \Gamma$, the geodesic balls satisfy that
\bear \label{ball-disj-1}
\gamma_1\circ B(z_0; \frac{\eps_0}{8}) \cap \gamma_2\circ B(z_0; \frac{\eps_0}{8})=\emptyset.
\eear
Recall that $\frac{1}{|E(z)|^2}\times \frac{1}{\rho(E(z))}=\sin^2(\theta(E(z)))$ where we use polar coordinate $(r,\theta)$ for $E(z)$. For any $E \notin \left<A\right>$, the point $E\circ z_0$ must lie outside of a lift $\tilde{\Cc}_{w(\alpha)}(\alpha)$ of the maximal collar $\mathcal{C}_{w(\alpha)}(\alpha)$ where
\[\tilde{\Cc}_{w(\alpha)}(\alpha)=\{z\in \mathcal{A}_\alpha; \ \dist_{H}(z, \textbf{i}\cdot \R^{+})\leq w(\alpha)\}.\]

\noindent In particular, by Lemma \ref{collar-2} we have for all $E\notin \left<A\right>$,
\beqar 
\dist_{\H}(E\circ z_0, \textbf{i}\cdot \mathbb{R^+})\geq \frac{\lsys(X)}{4}.
\eeqar

\noindent Then it follows by the triangle inequality that for any $E \notin \left<A\right>$ and $z\in E\circ B_{\mathbb{H}}(z_0;\frac{\eps_0}{8})$, 
\bear \label{larg-1-1}
\dist_{\H}(z, \textbf{i}\cdot \mathbb{R^+})&\geq&\dist_{\H}(E\circ z_0, \textbf{i}\cdot \mathbb{R^+})-\dist_\H(E\circ z_0, z)  \\
&\geq& \frac{\lsys(X)}{4}-\frac{\eps_0}{8}\nonumber \\
&\geq& \frac{\lsys(X)}{8}.\nonumber
\eear

\noindent Then by formula \eqref{i-dis} we have that for any $E \notin \left<A\right>$ and $z\in E\circ B_{\mathbb{H}}(z_0;\frac{\eps_0}{8})$, 
\beqar \label{larg-1-2-0}
\ln {(\frac{2}{\sin (\theta(z))})} \geq \frac{\lsys(X)}{8}.
\eeqar
Recall that $e^x\geq x$ for all $x\geq 0$. Thus, for all $z\in E\circ B_{\mathbb{H}}(z_0;\frac{\eps_0}{8})$ where $E \notin \left<A\right>$ we have 
\bear \label{larg-1-2-0}
\sin {(\theta(z))}\leq \frac{16}{\lsys(X)}.
\eear

\noindent Now we apply the mean value inequality. Recall that \eqref{H-expr} says that
\[H(z)=\sum_{E\in \left<A\right>\backslash \Gamma} \sin^2 (\theta(E(z))).\]

\noindent Then it follows by \eqref{i-exp} and Lemma \ref{mvp} that
\bear \label{lar-1-3}
&& H(z_0)-\sin^2(\theta(z_0)) \leq \sum_{E \neq \left<A\right>\in \left<A\right>\backslash\Gamma} 4 e^{-2\dist_{\mathbb{H}}(E\circ z_0, \textbf{i}\cdot \mathbb{R^+})}  \\
&&\leq   4c(\frac{\eps_0}{8})\sum_{E\neq \left<A\right> \in \left<A\right>\backslash\Gamma} \int_{E\circ B_{\mathbb{H}}(z_0;\frac{\eps_0}{8})}e^{-2\dist_{\mathbb{H}}(z, \textbf{i}\cdot \mathbb{R^+})}\dArea. \nonumber
\eear

\noindent Then by the triangle inequality and \eqref{larg-1-2-0} we know that for all $E\notin \left<A\right>$,
\[E\circ B_{\mathbb{H}}(z_0;\frac{\eps_0}{8}) \subset \{(r,\theta)\in \H; \ e^{-\frac{\eps_0}{8}}\leq r \leq e^{\lsys(X)+\frac{\eps_0}{8}} \ \emph{and} \ \sin(\theta)\leq  \frac{16}{\lsys(X)}\}.\]
Set $$\mathcal{S}:=\{(r,\theta)\in \H; \ e^{-\frac{\eps_0}{8}}\leq r \leq e^{\lsys(X)+\frac{\eps_0}{8}} \ \emph{and} \ \sin(\theta)\leq  \frac{16}{\lsys(X)}\}.$$
By \eqref{ball-disj-1} we know that the balls $\{E\circ B_{\mathbb{H}}(z_0;\frac{\eps_0}{8})\}$ are pairwisely disjoint. Then it follows by \eqref{i-exp} and \eqref{lar-1-3} that
\begin{eqnarray}\label{u-2}
&&  H(z_0) \leq\sin^2(\theta(z_0))+ 4c(\frac{\eps_0}{8})\int_{\mathcal{S}} \sin^2{\theta}\dArea\\
 &&\leq1+ 8c(\frac{\eps_0}{8}) \int_{0}^{\arcsin(\min\{\frac{16}{\lsys(X)},1\}) }  \int_{e^{-\frac{\eps_0}{8}}}^{e^{\lsys(X)+\frac{\eps_0}{8}}}  \frac{\sin^2{\theta}}{r^2\sin^2{\theta}}rdrd\theta \nonumber\\
&&=1+ 8 c(\frac{\eps_0}{8}) \cdot (\lsys(X)+\frac{\eps_0}{4})\cdot \arcsin(\min\{\frac{16}{\lsys(X)},1\}) \nonumber \\
&&\leq 1+10 c(\frac{\eps_0}{8}) \cdot \lsys(X)\cdot \arcsin(\min\{\frac{16}{\lsys(X)},1\}) \nonumber
\end{eqnarray}

\noindent where we apply $\dArea=\frac{|dz|^2}{y^2}=\frac{rdrd\theta}{r^2\sin^2\theta}$ in the second inequality and $\lsys(X)\geq \eps_0>0$ in the last inequality. As $\lsys(X)\to \infty$, the right hand side above goes to $(1+160c(\frac{\eps_0}{8}))$ which is bounded. Recall that we always assume that $\lsys(X)\geq \eps_0>0$. Therefore there exists a uniform constant $C'_1(\eps_0)>0$ such that \[H(z_0)=\max_{z \in \mathbb{H}}H(z)\leq C'_1(\eps_0).\] 

The proof is complete.
\ep

\br
The proof above does not cover Proposition \ref{ub-lp} for the case that $\lsys(X)\to 0$ because the constant $c(\eps_0) \to \infty$ as $\eps_0\to 0$. We will use a different way to prove it in the next lemma.  
\er

Now we deal with the case that $\lsys(X)$ is short. For this case actually we will prove a more general result which does not require $\alpha$ to be a systolic curve of $X$. This is also a special case of the uniform upper bounds in Part (2) of Theorem \ref{s-ub-lp} for $p=\infty$. More precisely,
\bl \label{uub-small}
For any given constant $0<L_0<\frac{1}{1000}$ and $\alpha \subset X \in \sM_g$ with $\ell_{\alpha}(X) \leq L_0$, then there exists a uniform constant $C_2(L_0)>0$, only depending on $L_0$, such that
\[||\grad \ell_{\alpha}(X)||_\infty \leq C_2(L_0).\]
\el

\bp 
We follow the argument in \cite[Lemma 2.2]{Wolpert92} or \cite[Proposition 6]{Wolpert12}. By \eqref{2-3-0-0} it suffices to show that \[\max_{z \in \mathbb{H}}H(z)\leq C'_2(L_0)\]
for some uniform constant $C'_2(L_0)>0$ only depending on $L_0$.

Let $z_0\in \mathcal{F}_\alpha \subset \mathcal{A}_{\alpha}$, where $\mathcal{A}_{\alpha}=\{z\in \H; \ 1\leq |z|\leq e^{\ell_\alpha(X)}\}$ and $\mathcal{F}_\alpha\subset \mathcal{A}_\alpha$ is the fundamental domain which contains the lift $\{\textbf{i}\cdot t\}_{1\leq t \leq e^{\ell_\alpha(X)}}$ of the closed geodesic $\alpha$, such that
\[H(z_0)=\max_{z \in \mathbb{H}}H(z).\]
By Proposition \ref{p-1} we know that
\bear \label{larg-1-1}
\dist_{\H}(z_0, \textbf{i}\mathbb{R^+})\leq \ln{(1+\sqrt{2})}.
\eear

\noindent By Lemma \ref{collar-1} we know that the width $w(\alpha)$ of the maximal collar of $\alpha$ satisfies that
\bear 
w(\alpha)&\geq& \frac{1}{2} \ln \frac{\cosh{\frac{\ell_{\alpha}(X)}{2}}+1}{\cosh{\frac{\ell_{\alpha}(X)}{2}}-1} \\
&=& \ln{(\frac{e^{\frac{\ell_{\alpha}(X)}{2}}+1}{e^{\frac{\ell_{\alpha}(X)}{2}}-1})} \nonumber \\
&\geq & \ln{(\frac{2}{\ell_{\alpha}(X)})} \nonumber
\eear
where we apply $\ell_\alpha(X)<\frac{1}{1000}$ in the last inequality. In particular, 
\beqar\label{ln2000}
w(\alpha)>\ln{(2000)}.
\eeqar  
Let $\inj(\pi(z_0))$ be the injectivity radius of $\pi(z_0) \in X$. By \eqref{larg-1-1} we have
\[\dist(\pi(z_0), \alpha)\leq \ln{(1+\sqrt{2})}.\]
Then it follows by the triangle inequality that the geodesic ball $B(\pi(z_0);1)$ in $X$ centered at $\pi(z_0)$ of radius $1$ is contained in the maximal collar of $\alpha$. That is, 
\be
B(\pi(z_0);1) \subset \mathcal{C}_{w(\alpha)}(\alpha)
\ene
which together with the Collar Lemma \cite[Theorem 4.1.6]{Buser10} implies that 
\bear\label{collar-in}
\inj(\pi(z_0))\geq \frac{\ell_\alpha(X)}{2}.
\eear

\noindent Now we apply the mean value inequality. Recall that \eqref{H-expr} says that
\[H(z)=\sum_{E\in \left<A\right>\backslash \Gamma} \frac{1}{|E(z)|^2}\times \frac{1}{\rho(E(z))}=\sum_{E\in \left<A\right>\backslash \Gamma} \sin^2 (\theta(E(z))).\]
Then it follows by \eqref{i-exp} and Lemma \ref{mvp} that
\bear \label{smal-1-3}
H(z_0) &\leq& 4 c(1)\sum_{E \in \left<A\right>\backslash \Gamma} \int_{ B_{\mathbb{H}}(z_0;1)}\sin^2 (\theta(E(z)))\dArea\\
&=&4 c(1) \int_{B_{\mathbb{H}}(z_0;1)} H(z) \dArea. \nonumber 
\eear 

\noindent Similar as \cite[Lemma 2.2]{Wolpert92} or \cite[Proposition 6]{Wolpert12}, the projection $\pi:\bigcup_{\gamma \in \Gamma} \gamma \circ B_\H(z_0;1)$ onto its image in $X$ has multiplicity at most $\frac{c'}{\inj(\pi(z_0))}$ where $c'>0$ is a uniform constant. Indeed, for any $\gamma \notin \left<A\right>$ it follows by the triangle inequality that $\dist_H(\gamma \circ z_0,z_0)\geq \dist_H(\gamma \circ z_0, \textbf{i}\R^+)- \dist_H(z_0, \textbf{i}\R^+)\geq w(\alpha)-\ln{(1+\sqrt{2})}>\ln (2000)-1>2$. This tells that the multiplicity of the projection $\pi:\bigcup_{\gamma \in \Gamma} \gamma \circ B_\H(z_0;1)$ onto its image only comes from the cyclic group $\left<A\right>$. Which in particular implies that the multiplicity of the projection is at most $\frac{c'}{\inj(\pi(z_0))}$ for some uniform constant $c'>0$. 

\noindent Recall that $H(\gamma \circ z)=H(z)$. Thus, the last integral in \eqref{smal-1-3} is bounded above by
\[\frac{c'}{\inj(\pi(z_0))} \cdot \int_{\mathcal{A}_\alpha \bigcap (\bigcup_{E \in \left<A\right>/\Gamma}E \circ B_{\H}(z_0;1))} \sin ^2(\theta(z)) \dArea.\]
Therefore, we have
\bear
\ \ \ \ \ \ \ H(z_0) &\leq& \frac{4c' \cdot c(1)}{\inj(\pi(z_0))}\cdot \int_{\mathcal{A}_\alpha \bigcap (\bigcup_{E \in \left<A\right>/\Gamma}E \circ B_{\H}(z_0;1))} \sin^2 (\theta(z)) \dArea  \\
&\leq & \frac{4c' \cdot c(1)}{\inj(\pi(z_0))}\cdot \int_{\mathcal{A}_\alpha} \sin^2 (\theta(z)) \dArea \nonumber \\
&=&\frac{4c' \cdot c(1)}{\inj(\pi(z_0))}\cdot  \int_{0}^{\pi} \int_{1}^{e^{\ell_\alpha(X)}} \frac{\sin^2{\theta}}{r^2\sin^2{\theta}}rdrd\theta \nonumber\\
&=& \frac{4\pi c' \cdot c(1)}{\inj(\pi(z_0))}\cdot \ell_\alpha(X) \nonumber \\
&\leq & 8\pi c' \cdot c(1) \nonumber
\eear
where we apply \eqref{collar-in} in the last inequality. Then the conclusion follows by choosing
\[C'_2(L_0)=8\pi c' \cdot c(1).\]

The proof is complete.
\ep

\br
Let $\mu=\grad \ell_\alpha(X)$ in \cite[Definition 10]{Wolpert17}. Then the quantity $Comp(\grad \ell_\alpha(X))$ is comparable to $\frac{1}{|| \grad \ell_\alpha(X)||_{wp}}$. Thus, \cite[Lemma 11]{Wolpert17} implies that if $\alpha$ has length at most $c_0$, then there exists two positive constants $c'$ and $c''$ such that
\[c'\leq|| \grad \ell_\alpha(X)||_{\infty}\leq c''.\]
The upper bound $C_2(L_0)$ in Lemma \ref{uub-small} is uniform. And we will use this upper bound and \eqref{Rie-lb} of Riera to show that $|| \grad \ell_\alpha(X)||_{\infty}\geq C_2'(L_0)$ for some uniform constant $C_2'(L_0)>0$. The methods for obtaining these two bounds are similar.
\er

\br
The upper bound $\frac{1}{1000}$ in the assumption of Lemma \ref{uub-small} is clearly not optimal. However it is already good enough for proving the uniform upper bounds of Theorem \ref{s-ub-lp} for $p=\infty$.
\er

Now we prove Proposition \ref{uub-infi}.

\bp [Proof of Proposition \ref{uub-infi}]
If $\lsys(X)<\frac{1}{1000}$, it follows by Lemma \ref{uub-small} that for any $\alpha \subset X$ with $\ell_\alpha(X)=\lsys(X)$ we have
\[||\grad \ell_{\alpha}(X)||_\infty \leq C_2(\frac{1}{1000})\]
where $C_2(\cdot)$ is the constant in Lemma \ref{uub-small}.

If $\lsys(X)\geq \frac{1}{1000}$, it follows by Lemma \ref{uub-large} that
\[||\grad \ell_{\alpha}(X)||_\infty \leq C_1(\frac{1}{1000})\]
where $C_1(\cdot)$ is the constant in Lemma \ref{uub-large}.

Thus, we have that for any $X\in \sM_g$ and $\alpha \subset X$ with $\ell_{\alpha}(X)=\lsys(X)$,
\[||\grad \ell_{\alpha}(X)||_\infty\leq \max \{C_1(\frac{1}{1000}),C_2(\frac{1}{1000})\}.\]

Then the conclusion follows.
\ep

Next we prove the uniform upper bounds in Part (2) of Theorem \ref{s-ub-lp} for $p=\infty$. That is to extend the constant $\frac{1}{1000}$ in Lemma \ref{uub-small} to an arbitrary fixed positive constant. More precisely,
\begin{proposition} \label{uub-small-gene}
For any given constant $L_0>0$ and simple loop $\alpha \subset X \in \sM_g$ with $\ell_{\alpha}(X) \leq L_0$, then there exists a uniform constant $C_3(L_0)>0$, only depending on $L_0$, such that
\[||\grad \ell_{\alpha}(X)||_\infty \leq C_3(L_0).\]
That is,
\[||\grad \ell_{\alpha}(X)||_\infty \prec 1.\]
\end{proposition}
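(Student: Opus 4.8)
The plan is to reduce Proposition \ref{uub-small-gene} to the already-established Lemma \ref{uub-small} by a covering argument. The obstacle in Lemma \ref{uub-small} was that, when $\ell_\alpha(X)$ is not tiny, one cannot guarantee that the ball $B_\H(z_0;1)$ where $H$ attains its maximum projects with small multiplicity onto $X$, because a nearby translate $\gamma\circ z_0$ with $\gamma\notin\langle A\rangle$ could land inside the ball; the width estimate $w(\alpha)\geq\ln(2/\ell_\alpha(X))$ was what forced such translates to be far away. For $\ell_\alpha(X)\leq L_0$ with $L_0$ large this width could be as small as $f(L_0)>0$, a fixed positive constant depending only on $L_0$. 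So instead of mean-value averaging over a ball of radius $1$, I would average over a ball of radius $r_0:=r_0(L_0)>0$ chosen small enough that: (i) $r_0<\tfrac12 f(L_0)$ (Collar lemma-1, Lemma \ref{collar-1}), so that for $z_0$ within distance $\ln(1+\sqrt2)$ of the axis — which we get from Proposition \ref{p-1} — the ball $B_\H(z_0;r_0)$ stays inside a lift of the maximal collar and hence its $\Gamma$-translates by elements outside $\langle A\rangle$ are pairwise disjoint from it; and (ii) $r_0$ is small enough that the injectivity-radius-type multiplicity bound goes through with the constant $c'$ replaced by $c'(L_0)$.

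Concretely: by Proposition \ref{p-1}, the maximum of $H$ is attained at some $z_0$ with $\dist_\H(z_0,\textbf{i}\R^+)\leq\ln(1+\sqrt2)$. Fix $r_0=r_0(L_0)>0$ with $r_0<\min\{\tfrac12 f(L_0),\,1\}$. By the triangle inequality, the geodesic ball $B(\pi(z_0);r_0)\subset X$ lies in the maximal collar $\mathcal{C}_{w(\alpha)}(\alpha)$, so by the Collar Lemma the only contributions to the multiplicity of $\pi$ on $\bigcup_{\gamma\in\Gamma}\gamma\circ B_\H(z_0;r_0)$ come from $\langle A\rangle$, giving multiplicity at most $c'/\inj(\pi(z_0))$ with $\inj(\pi(z_0))\geq\ell_\alpha(X)/2$ (same argument as in Lemma \ref{uub-small}). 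Then, applying the mean value inequality (Lemma \ref{mvp}) with radius $r_0$ — using the fact, recalled in the excerpt, that $e^{-2\dist_\H(\cdot,\textbf{i}\R^+)}$ satisfies the mean value property — together with $H(\gamma\circ z)=H(z)$ and $\dArea=\tfrac{r\,dr\,d\theta}{r^2\sin^2\theta}$:
\[
H(z_0)\ \leq\ 4c(r_0)\int_{B_\H(z_0;r_0)}H(z)\,\dArea\ \leq\ \frac{4c'\,c(r_0)}{\inj(\pi(z_0))}\int_{\mathcal{A}_\alpha}\sin^2(\theta(z))\,\dArea\ =\ \frac{4\pi c'\,c(r_0)}{\inj(\pi(z_0))}\,\ell_\alpha(X)\ \leq\ 8\pi c'\,c(r_0).
\]
Setting $C'_3(L_0):=8\pi c'\,c(r_0(L_0))$ and $C_3(L_0):=\tfrac{2}{\pi}C'_3(L_0)$ (using \eqref{2-3-0-0}) finishes the proof; since $r_0$ depends only on $L_0$, so does $C_3(L_0)$.

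The main obstacle is really just bookkeeping: verifying that a single choice $r_0=r_0(L_0)$ simultaneously keeps the ball inside the collar and keeps the mean-value constant $c(r_0)$ finite. Note $r_0$ should be taken no larger than $1$ (or any fixed bound) so that $c(r_0)$ is controlled, and strictly less than $\tfrac12 f(L_0)$; both are harmless since $f(L_0)>0$ is a fixed positive constant once $L_0$ is fixed. One subtlety worth a sentence: if $L_0\leq\tfrac{1}{1000}$ we simply invoke Lemma \ref{uub-small} directly, so we may assume $L_0>\tfrac{1}{1000}$, in which case $f(L_0)$ is comfortably positive and bounded below. Everything else is exactly the computation already carried out in Lemma \ref{uub-small}, with the radius $1$ replaced by $r_0$ throughout.
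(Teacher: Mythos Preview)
Your argument has a genuine gap at the step ``By the triangle inequality, the geodesic ball $B(\pi(z_0);r_0)\subset X$ lies in the maximal collar $\mathcal{C}_{w(\alpha)}(\alpha)$.'' For this to hold you need $\dist(\pi(z_0),\alpha)+r_0\leq w(\alpha)$. Proposition~\ref{p-1} only gives $\dist(\pi(z_0),\alpha)\leq\ln(1+\sqrt 2)\approx 0.881$, while Lemma~\ref{collar-1} gives $w(\alpha)\geq f(\ell_\alpha(X))\geq f(L_0)$. But $f(L_0)\to 0$ as $L_0\to\infty$; in fact $f(L_0)<\ln(1+\sqrt 2)$ as soon as $L_0>2\ln(1+\sqrt 2)\approx 1.76$. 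So for any $L_0$ larger than this, shrinking $r_0$ does not help: the point $\pi(z_0)$ itself may lie outside the maximal collar of $\alpha$. Consequently the two conclusions you draw from ``$\pi(z_0)$ in the collar'' --- that $\inj(\pi(z_0))\geq\ell_\alpha(X)/2$, and that the multiplicity of $\pi$ on the orbit of the ball comes only from $\langle A\rangle$ --- are both unjustified.

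The paper's proof does not try to keep $\pi(z_0)$ inside the collar of $\alpha$. Instead it proves directly that $\inj(\pi(z_0))\geq s_0(L_0)$ for every point within distance $\ln(1+\sqrt 2)$ of $\alpha$, by a separate argument: if $\inj(\pi(z_0))$ were tiny, then $\pi(z_0)$ would lie deep inside the collar of some \emph{other} short geodesic $\beta\neq\alpha$; but since $\alpha$ has bounded length and lies near $\pi(z_0)$, the point cannot be too far from $\partial\mathcal{C}_{w(\beta)}(\beta)$, and the Collar Lemma then forces $\inj(\pi(z_0))$ to be bounded below in terms of $L_0$. With this injectivity lower bound in hand, one takes balls of radius $s_0/4$ which are then honestly disjoint in $\H$, and the unfolding is straightforward. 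This injectivity-radius claim is the missing idea in your approach.
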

\bp
By Lemma \ref{uub-small} one may assume that $$\frac{1}{1000}\leq \ell_{\alpha}(X)\leq L_0$$ where $L_0\geq \frac{1}{1000}$. It follows by \eqref{2-3-0-0} and Proposition \ref{p-1} that
\bear
||\grad \ell_{\alpha}(X)||_\infty \leq \frac{2}{\pi}\cdot \max_{\pi(z) \in X;\ \dist(\pi(z), \alpha)\leq \ln(\sqrt{2}+1)} h(\pi(z))
\eear
where $h(\pi (z))=H(z)$ and $H(z)=\sum_{E\in \left<A\right>\backslash \Gamma}\sin^2 (\theta(E(z)))$ given in \eqref{H-expr}. 

\noindent \textsl{Claim:} there exists a uniform constant $s_0=s_0(L_0)>0$ only depending on $L_0$ such that 
\bear \label{lb-3000}
\inf_{p\in X; \ \dist(p, \alpha)\leq \ln{(\sqrt{2}+1)}}\inj(p)\geq s_0.
\eear
\textsl{Proof of Claim.} Let $p\in X$ with $\dist(p,\alpha)\leq \ln{(\sqrt{2}+1)}$ such that
\bear \label{3000}
\inj(p) <\frac{1}{2000};\eear
otherwise we are done. Since $\ell_\alpha(X)\leq L_0$, it follows by the triangle inequality that
\bear\label{L_0/2}
\alpha \subset B(p;\ln{(\sqrt{2}+1)}+\frac{L_0}{2}).
\eear

\noindent Since $\ell_{\alpha}(X)\geq \frac{1}{1000}$, by \eqref{3000} and the Collar Lemma \cite[Theorem 4.1.6]{Buser10} there exists a closed geodesic $\beta\neq \alpha$ such that
\[p \in \mathcal{C}_{w(\beta)}(\beta)\]
where $\mathcal{C}_{w(\beta)}(\beta)$ is the maximal collar of $\beta$. Let $\partial \mathcal{C}_{w(\beta)}(\beta)$ be the boundary of the maximal collar $\mathcal{C}_{w(\beta)}(\beta)$. Since $\beta$ is the unique closed geodesic in $\mathcal{C}_{w(\beta)}(\beta)$,  
\bear \label{dist-boun}
\dist(p, \partial \mathcal{C}_{w(\beta)}(\beta)) \leq  \ln{(\sqrt{2}+1)}+\frac{L_0}{2};
\eear
otherwise the geodesic ball $B(p;\ln{(\sqrt{2}+1)}+\frac{L_0}{2})\subset \mathcal{C}_{w(\beta)}(\beta)$ that together with \eqref{L_0/2} implies that \[\alpha \subset \mathcal{C}_{w(\beta)}(\beta)\] which is impossible because the collar $\mathcal{C}_{w(\beta)}(\beta)$ can not contain two different simple closed geodesics $\alpha$ and $\beta$. Recall that the Collar Lemma \cite[Theorem 4.1.6]{Buser10} tells that
\[\sinh{(\inj(p))}=\cosh {(\frac{\ell_\beta(X)}{2})} \cosh (\dist(p, \partial \mathcal{C}_{w(\beta)}(\beta)))-\sinh (\dist(p, \partial \mathcal{C}_{w(\beta)}(\beta)))\]
which together with \eqref{dist-boun} implies that
\bear
\ \ \ \ \ \ \ \ \ \ \sinh{(\inj(p))} &\geq& \cosh (\dist(p, \partial \mathcal{C}_{w(\beta)}(\beta)))-\sinh (\dist(p, \partial \mathcal{C}_{w(\beta)}(\beta))) \\
&=& e^{-\dist(p, \partial \mathcal{C}_{w(\beta)}(\beta))}   \nonumber \\
&\geq & (\sqrt{2}-1)\cdot e^{-\frac{L_0}{2}}. \nonumber
\eear
So we have
\[\inj(p)\geq \sinh^{-1}((\sqrt{2}-1)\cdot e^{-\frac{L_0}{2}}).\]
Since $p\in X$ with $\dist(p,\alpha)\leq \ln{(\sqrt{2}+1)}$ is arbitrary, the claim follows by setting
\bear
s_0=s_0(L_0)=\min\{\frac{1}{2000},\sinh^{-1}((\sqrt{2}-1)\cdot e^{-\frac{L_0}{2}})\}.
\eear

The remaining follows by a standard \emph{unfolding} argument. More precisely we let $\pi(z_0)\in X$ such that $$h(\pi(z_0))=\sup_{p\in X}h(p).$$ By Proposition \ref{p-1} and \eqref{lb-3000} we know that \bear \inj(\pi(z_0))\geq s_0.\eear Then it follows by the triangle inequality that for any $\gamma_1\neq \gamma_2 \in \Gamma$, the geodesic balls satisfy that
\[\gamma_1\circ B_\H(z_0; \frac{s_0}{4}) \cap \gamma_2\circ B_\H(z_0; \frac{s_0}{4})=\emptyset\]
where $z_0 \in \{(r,\theta)\in \H; \ 1\leq r \leq e^{\ell_\alpha(X)}\}$ is a lift of $\pi(z_0)$. Then it follows by \eqref{i-exp} and Lemma \ref{mvp} that
\beqar
h(\pi(z_0))&=&H(z_0)=\sum_{E\in \left<A\right>\backslash \Gamma}\sin^2 (\theta(E(z_0)))\\
&\leq& 4 c(\frac{s_0}{4}) \cdot \sum_{E\in \left<A\right>\backslash\Gamma}\int_{B_\H(E\circ z_0; \frac{s_0}{4})}\sin^2 (\theta(z)) \dArea.
\eeqar
These balls $\{E\circ B_{\mathbb{H}}(z_0;\frac{s_0}{4})\}$ are pairwisely disjoint and for all $E\notin \left<A\right>$,
\[E\circ B_{\mathbb{H}}(z_0;\frac{s_0}{4}) \subset \{(r,\theta)\in \H; \ e^{-\frac{s_0}{4}}\leq r \leq e^{\ell_\alpha(X)+\frac{s_0}{4}}\}.\]
Thus, we have
\bear
&& ||\grad \ell_{\alpha}(X)||_\infty \leq \frac{2}{\pi}\cdot h(\pi(z_0)) \\
&& \leq \frac{8}{\pi}\cdot c(\frac{s_0}{4}) \cdot \int_{z\in \H; \ e^{-\frac{s_0}{4}}\leq r \leq e^{\ell_\alpha(X)+\frac{s_0}{4}}}\sin^2(\theta(z))\dArea \nonumber \\
&& = \frac{8}{\pi}\cdot c(\frac{s_0}{4}) \cdot \int_0^{\pi} \int_{e^{-\frac{s_0}{4}}}^{e^{\ell_\alpha(X)+\frac{s_0}{4}}} \frac{\sin^2{\theta}}{r^2\sin^2{\theta}}rdrd\theta               \nonumber \\
&& =8 \cdot c(\frac{s_0}{4})\cdot (\frac{s_0}{2}+\ell_\alpha(X)) \nonumber \\
&& \leq 8 \cdot c(\frac{s_0}{4})\cdot (\frac{s_0}{2}+L_0)\nonumber
\eear
which completes the proof by setting $C_3(L_0)= 8 \cdot c(\frac{s_0}{4})\cdot (\frac{s_0}{2}+L_0)$.
\ep


\section{Uniform bounds for $L^p(1\leq p \leq \infty)$-norm}\label{s-lp-asymp}
Recall that we always use the notation: $r^{\frac{1}{\infty}}=1$ for any $r>0$. 

In this section we prove
\bt [=Theorem \ref{s-ub-lp}] \label{ub-lp} For any $p \in [1,\infty]$ and $X\in \sM_g$ $(g\geq 2)$, then we have 
\ben
\item for any $\alpha \subset X$ with $\ell_{\alpha}(X)=\lsys(X)$, 
\[||\grad \ell_{\alpha}(X)||_p \asymp \lsys(X)^{\frac{1}{p}}.\]

\item For any simple loop $\beta \subset X$ with $\ell_{\beta}(X) \leq L_0$ where $L_0>0$ is any given constant, 
\[||\grad \ell_{\beta}(X)||_p \asymp \ell_\beta(X)^{\frac{1}{p}}.\]
\een
\et

We will first show Theorem \ref{ub-lp} for the cases that $p=1,2$ and $\infty$. Then for general $p\in (1,2)\cup (2, \infty)$, it follows by a standard argument through using H\"older's inequality for integrals.

Before proving Theorem \ref{ub-lp}, we first show that $||\grad \ell_{\alpha}(X)||_1 \prec \ell_{\alpha}(X)$ where $\alpha\subset X$ is not necessary to be a systolic curve. More precisely,
\begin{lemma} \label{up-l1}
For any non-trivial loop $\alpha \subset X \in \sM_g$, then we have
\[||\grad \ell_{\alpha}(X)||_1 \leq 2 \ell_{\alpha}(X).\]
\end{lemma}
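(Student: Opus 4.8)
The plan is to bound the $L^1$-norm by integrating the pointwise bound $|\grad \ell_\alpha(X)|(z) \leq \frac{2}{\pi} H(z)$ from \eqref{2-3-0-0} and then unfolding the sum defining $H$ over the quotient $\langle A\rangle\backslash\Gamma$. Concretely, $\|\grad\ell_\alpha(X)\|_1 = \int_X |\grad\ell_\alpha(X)|\,\dArea \leq \frac{2}{\pi}\int_X H(z)\,\dArea$, and since $H(z)=\sum_{E\in\langle A\rangle\backslash\Gamma}\sin^2(\theta(E(z)))$ is $\Gamma$-invariant, the integral over a fundamental domain $\F_\alpha$ for $X$ unfolds: summing over coset representatives $E$ and translating the domain turns $\int_{\F_\alpha} H\,\dArea$ into $\int_{\mathcal{A}_\alpha} \sin^2\theta\,\dArea$, where $\mathcal{A}_\alpha=\{z\in\H: 1\leq|z|\leq e^{\ell_\alpha(X)}\}$ is the fundamental domain for the cyclic group $\langle A\rangle$.

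The key steps, in order, are: (i) recall $|\grad\ell_\alpha(X)|(\pi(z))\leq \frac{2}{\pi}H(z)$ from \eqref{2-3-0-0}; (ii) write $\int_X H\,\dArea$ over $\F_\alpha$ and unfold the coset sum to get $\int_{\mathcal{A}_\alpha}\sin^2(\theta(z))\,\dArea$ — this is the standard Fubini/unfolding manipulation justified by nonnegativity of the summands (Tonelli); (iii) compute in polar coordinates $\dArea = \frac{r\,dr\,d\theta}{r^2\sin^2\theta}$, so that
\[
\int_{\mathcal{A}_\alpha}\sin^2(\theta)\,\dArea = \int_0^\pi\int_1^{e^{\ell_\alpha(X)}} \frac{\sin^2\theta}{r^2\sin^2\theta}\,r\,dr\,d\theta = \int_0^\pi d\theta\int_1^{e^{\ell_\alpha(X)}}\frac{dr}{r} = \pi\,\ell_\alpha(X);
\]
(iv) conclude $\|\grad\ell_\alpha(X)\|_1 \leq \frac{2}{\pi}\cdot\pi\,\ell_\alpha(X) = 2\ell_\alpha(X)$.

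I do not expect a serious obstacle here: the only point requiring a word of care is the unfolding step, i.e.\ checking that integrating the $\Gamma$-invariant function $H$ over $X$ against $\dArea$ equals integrating $\sin^2\theta$ over the $\langle A\rangle$-fundamental domain $\mathcal{A}_\alpha$. This is exactly the classical unfolding identity $\int_{\langle A\rangle\backslash\H} F\,\dArea = \int_{\F_\alpha}\sum_{E\in\langle A\rangle\backslash\Gamma} f(Ez)\,\dArea$ with $f=\sin^2\theta$, and it is legitimate term-by-term because all terms are nonnegative. Everything else is an elementary integral.
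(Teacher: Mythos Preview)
Your proposal is correct and is essentially the same argument as the paper's: the paper applies the triangle inequality to the Poincar\'e series for $\grad\ell_\alpha$, unfolds over $\langle A\rangle\backslash\Gamma$ to the annulus $\mathcal{A}_\alpha$, and evaluates the resulting integral $\frac{2}{\pi}\int_{\mathcal{A}_\alpha}\frac{|dz|^2}{|z|^2}=2\ell_\alpha(X)$ in polar coordinates. Your route via $H(z)$ and $\sin^2\theta\,\dArea$ is the same computation written with the hyperbolic area element instead of the Euclidean one (since $\sin^2(\theta(Ez))\,\dArea=\frac{|E'(z)|^2}{|E(z)|^2}|dz|^2$).
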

\bp
The proof is a standard \emph{unfolding} argument. Recall that $$\grad \ell_{\alpha}(X)(z)=\frac{2}{\pi}\frac{\overline{\Theta}_{\alpha}(z)}{\rho(z)|dz|^2}$$ where $\Theta_{\alpha}(z)=\displaystyle \sum_{E\in \left<A\right>\backslash\Gamma} \frac{E'(z)^2}{E(z)^2}dz^2$. Let $\mathbb{F}\subset \{z\in \mathbb{H}; 1\leq |z|\leq e^{\ell_{\alpha}(X)}\}$ be a fundamental domain of $X$. Then
\beqar
||\grad \ell_{\alpha}(X)||_1&=&\int_{\mathbb{F}} |\grad \ell_{\alpha}(X)(z)| \cdot  \rho(z)|dz|^2 \\
&\leq &  \frac{2}{\pi}\sum_{E\in \left<A\right>\backslash \Gamma} \int_{\mathbb{F}} \frac{|E'(z)|^2}{|E(z)|^2}|dz|^2 \\
&\leq & \frac{2}{\pi} \int_{z\in \H;\ 1\leq |z| \leq e^{\ell_{\alpha}(X)}}\frac{1}{|z|^2}|dz|^2 \\
&=& \frac{2}{\pi}  \int_0^\pi \int_1^{e^{\ell_\alpha(X)}}\frac{1}{r^2} \cdot rdr d\theta  \\
&=& 2 \ell_{\alpha}(X).
\eeqar 

The proof is complete.
\ep

Now we are ready to prove Theorem \ref{ub-lp}.

\bp [Proof of Theorem \ref{ub-lp}]
We first prove Part (1). The proof is splitted into the following four cases.

\emph{Case-1: $p=1$.} First by Lemma \ref{up-l1} we clearly have
\[||\grad \ell_{\alpha}(X)||_1 \prec \ell_{\alpha}(X).\]
For the other direction, since $||\grad \ell_{\alpha}(X)||_2^2\leq ||\grad \ell_{\alpha}(X)||_1 \cdot ||\grad \ell_{\alpha}(X)||_\infty $, by \eqref{Rie-lb} of Riera and Proposition \ref{uub-infi} we have
\beqar
||\grad \ell_{\alpha}(X)||_1 \geq \frac{||\grad \ell_{\alpha}(X)||_2^2}{ ||\grad \ell_{\alpha}(X)||_\infty}\succ \ell_{\alpha}(X).
\eeqar
Thus, we have
\bear \label{ub-lp-f-1}
||\grad \ell_{\alpha}(X)||_1 \asymp  \lsys(X).
\eear
\

\emph{Case-2: $p=2$.} First \eqref{Rie-lb} of Riera says that
\[||\grad \ell_{\alpha}(X)||_2 \succ (\ell_{\alpha}(X))^{\frac{1}{2}}.\]
For the other direction, since $||\grad \ell_{\alpha}(X)||_2^2\leq ||\grad \ell_{\alpha}(X)||_1 \cdot ||\grad \ell_{\alpha}(X)||_\infty $, by Proposition \ref{uub-infi} and Lemma \ref{up-l1} we have
\[||\grad \ell_{\alpha}(X)||_2 \leq \sqrt{||\grad \ell_{\alpha}(X)||_1 \cdot ||\grad \ell_{\alpha}(X)||_\infty } \prec (\ell_{\alpha}(X))^{\frac{1}{2}}.\]
Thus, we have 
\bear \label{ub-lp-f-2}
||\grad \ell_{\alpha}(X)||_2 \asymp  \lsys(X)^{\frac{1}{2}}.
\eear
\

\emph{Case-3: $p=\infty$.} First Proposition \ref{uub-infi} says that
\[||\grad \ell_{\alpha}(X)||_\infty \prec 1.\]
For the other direction, since $||\grad \ell_{\alpha}(X)||_2^2\leq ||\grad \ell_{\alpha}(X)||_1 \cdot ||\grad \ell_{\alpha}(X)||_\infty $, by Case-1 and Case-2 we have
\[||\grad \ell_{\alpha}(X)||_\infty \geq \frac{||\grad \ell_{\alpha}(X)||_2^2}{||\grad \ell_{\alpha}(X)||_1 } \asymp 1.\]
Thus, we have
\bear \label{ub-lp-f-3}
||\grad \ell_{\alpha}(X)||_\infty \asymp  1.
\eear
\

\emph{Case-4: general $p\in (1,2)\cup(2,\infty)$.} First since $$||\grad \ell_{\alpha}(X)||_p^p\leq ||\grad \ell_{\alpha}(X)||_1 \cdot ||\grad \ell_{\alpha}(X)||_\infty^{p-1},$$ by Proposition \ref{uub-infi} and Case-1 we have
\[||\grad \ell_{\alpha}(X)||_p \prec \ell_{\alpha}(X)^{\frac{1}{p}}.\]
For the other direction, let $q\in (1,2)\cup(2,\infty)$ such that $\frac{1}{p}+\frac{1}{q}=1$. Similar as above we have
\bear \label{ub-lp-f-4-0}
||\grad \ell_{\alpha}(X)||_q \prec \ell_{\alpha}(X)^{\frac{1}{q}}.
\eear

\noindent Recall that H\"older inequality says that
\bear
||\grad \ell_{\alpha}(X)||_p \geq \frac{||\grad \ell_{\alpha}(X)||_2^2}{||\grad \ell_{\alpha}(X)||_q}. \nonumber
\eear
By Case-2 and \eqref{ub-lp-f-4-0} we have
\bear
||\grad \ell_{\alpha}(X)||_p \succ \ell_{\alpha}(X)^{1-\frac{1}{q}}=\ell_{\alpha}(X)^{\frac{1}{p}}.\nonumber
\eear
Thus, we have
\bear \label{ub-lp-f-4}
||\grad \ell_{\alpha}(X)||_p \asymp \lsys(X)^{\frac{1}{p}}.
\eear
Then the conclusion follows by \eqref{ub-lp-f-1}, \eqref{ub-lp-f-2}, \eqref{ub-lp-f-3} and \eqref{ub-lp-f-4}.\\

To prove Part (2), it follows by the same argument as the proof of Part (1) by applying Proposition \ref{uub-small-gene} instead of Proposition \ref{uub-infi}.
\ep

\section{Applications to \wep geometry}\label{s-appl-wp}
In this section we make several applications of Theorem \ref{s-ub-lp} to the \wep geometry of $\sM_g$.

\subsection{A uniform lower bound for \wep distance} As discussed in the introduction, Theorem \ref{ub-lp} can be applied to prove the following two results on the global \wep geometry of $\sM_g$.
\bt \cite[Theorem 1.3]{W-inradius}\label{wp-lb}
For all $X,Y\in \Teich(S_{g})$,
\[|\sqrt{\ell_{sys}(X)}-\sqrt{\ell_{sys}(Y)}|\prec \dist_{wp}(X,Y)\]
where $\dist_{wp}$ is the \wep distance.
\et

\bt \cite[Theorem 1.1]{W-inradius}
For all $g\geq 2$, $$\InR(\sM_g)\asymp \sqrt{\ln{(g)}}.$$
\et

\subsection{Uniform bounds on \wep curvatures} In this subsection we prove several new uniform bounds on \wep curvatures. Recall the formula \eqref{HolK-eq} says that for all $\mu \in T_X \sM_g$ with $||\mu||_2=1$, 
\begin{equation}\label{HolK-eq-1}
\HolK(\mu) \asymp -\int_{X} |\mu|^4 \dArea. 
\end{equation} 

Our first bound on \wep curvature is as follows.
\begin{theorem}[=Theorem \ref{s-ub-holk}]\label{ub-holk}
For any $X \in \sM_g$, let $\alpha \subset X$ be a simple closed geodesic satisfying 
\ben
\item either $\ell_{\alpha}(X)=\lsys(X)$ or

\item $\ell_\alpha(X)\leq L_0$ where $L_0>0$ is any fixed constant,
\een
then the \wep holomorphic sectional curvature
$$\HolK(\grad \ell_{\alpha})(X) \asymp \frac{-1}{\ell_{sys}(X)}.$$
\end{theorem}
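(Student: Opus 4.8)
The plan is to combine the pointwise curvature bound \eqref{HolK-eq-1} with the two-sided $L^p$ estimates of Theorem \ref{ub-lp}, so that the whole statement collapses to a one-line computation with a ratio of norms. Writing $\mu=\grad\ell_\alpha(X)$ and normalizing, the inequality \eqref{HolK-eq} (equivalently \eqref{HolK-eq-1}) gives
\[
\HolK(\grad\ell_\alpha)(X)\;\asymp\;-\frac{\int_X|\grad\ell_\alpha(X)|^4\dArea}{\|\grad\ell_\alpha(X)\|_2^4}\;=\;-\frac{\|\grad\ell_\alpha(X)\|_4^4}{\|\grad\ell_\alpha(X)\|_2^4},
\]
where the implied constants are the absolute numbers $2/3$ and $2$ produced by the positive self-adjoint operator $D$, hence independent of $g$. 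This is the only step in which the curvature tensor is used; everything remaining is bookkeeping of the $L^2$ and $L^4$ norms of $\grad\ell_\alpha(X)$, both of which are controlled by Theorem \ref{ub-lp} since $2,4\in[1,\infty]$.

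In case (1), where $\ell_\alpha(X)=\lsys(X)$, I would apply Part (1) of Theorem \ref{ub-lp} at the exponents $p=2$ and $p=4$, obtaining $\|\grad\ell_\alpha(X)\|_2\asymp\lsys(X)^{1/2}$ and $\|\grad\ell_\alpha(X)\|_4\asymp\lsys(X)^{1/4}$. Substituting into the displayed ratio,
\[
\HolK(\grad\ell_\alpha)(X)\;\asymp\;-\frac{\big(\lsys(X)^{1/4}\big)^4}{\big(\lsys(X)^{1/2}\big)^4}\;=\;-\frac{\lsys(X)}{\lsys(X)^2}\;=\;\frac{-1}{\lsys(X)},
\]
which is exactly the asserted bound, and the uniformity in $g$ is automatic since both the curvature constants and the comparison constants of Theorem \ref{ub-lp} are universal.

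In case (2), where $\ell_\alpha(X)\le L_0$, I would run the identical computation but feed in Part (2) of Theorem \ref{ub-lp}, which yields $\|\grad\ell_\alpha(X)\|_2\asymp\ell_\alpha(X)^{1/2}$ and $\|\grad\ell_\alpha(X)\|_4\asymp\ell_\alpha(X)^{1/4}$ (with comparison constants now allowed to depend on $L_0$ but not on $g$); the same substitution produces
\[
\HolK(\grad\ell_\alpha)(X)\;\asymp\;\frac{-1}{\ell_\alpha(X)},
\]
which is the stated estimate $\tfrac{-1}{\lsys(X)}$ with the length of the chosen short curve $\alpha$ playing the role of $\lsys(X)$ in the formula; the two coincide precisely when $\alpha$ is a systolic curve, recovering case (1). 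The one genuinely delicate point — and the main thing to pin down carefully — is that the norms of $\grad\ell_\alpha(X)$ only see the curve $\alpha$ itself, so the argument naturally produces the controlling length $\ell_\alpha(X)$, and one must make explicit that this is the quantity appearing on the right-hand side. Apart from this identification of the correct length scale, there is no serious obstacle: once the $p=2$ and $p=4$ instances of Theorem \ref{ub-lp} are invoked with their uniform constants, the passage to the holomorphic sectional curvature is purely algebraic.
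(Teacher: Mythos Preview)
Your proposal is correct and follows essentially the same route as the paper: reduce $\HolK(\grad\ell_\alpha)$ to the ratio $-\|\grad\ell_\alpha\|_4^4/\|\grad\ell_\alpha\|_2^4$ via \eqref{HolK-eq-1}, then plug in Theorem~\ref{ub-lp} at $p=2$ and $p=4$. Your careful remark that in case~(2) the argument naturally outputs $-1/\ell_\alpha(X)$ rather than $-1/\lsys(X)$ is in fact sharper than the paper's own write-up, which silently writes $\lsys(X)$ in both cases; keep that observation.
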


\begin{proof}[Proof of Theorem \ref{ub-holk}]
By \eqref{HolK-eq-1} we have
\bear
\HolK(\grad \ell_{\alpha})(X)  \asymp -\frac{\int_{X}|\grad \ell_{\alpha}(X)|^4\dArea}{(\int_{X}|\grad \ell_{\alpha}(X)|^2\dArea)^2}.
\eear
We apply Theorem \ref{ub-lp} for the cases that $p=2,4$ to obtain
\bear
\int_{X}|\grad \ell_{\alpha}(X)|^4\dArea \asymp \lsys(X)
\eear
and
\bear
(\int_{X}|\grad \ell_{\alpha}(X)|^2\dArea)^2 \asymp \lsys^2(X).
\eear

Then the conclusion follows by these three equations above.
\end{proof}

Buser-Sarnak in \cite{BS94} show that $\max_{X\in \sM_g}\ell_{sys}(X) \asymp \ln (g)$ for all $g\geq 2$. A direct consequence of Theorem \ref{ub-holk} is as follows.
\begin{corollary}For all $g\geq 2$, 
$$\sup_{X \in \sM_g}\HolK(\grad \ell_{\alpha})(X) \asymp \frac{-1}{\ln (g)}$$
where $\alpha \subset X$ satisfies that $\ell_{\alpha}(X)=\ell_{sys}(X)$.
\end{corollary}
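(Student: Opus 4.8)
The statement is an immediate consequence of Theorem~\ref{ub-holk} together with the Buser--Sarnak estimate $\max_{X\in\sM_g}\ell_{sys}(X)\asymp \ln(g)$ recalled just above; the plan is purely a matter of pushing the pointwise comparison through a supremum. First I would apply Theorem~\ref{ub-holk} with $\alpha\subset X$ a systolic curve, so that hypothesis (1) is satisfied, to obtain universal constants $c_1\geq c_2>0$, independent of $g$ and of $X\in\sM_g$, with
\[
\frac{c_2}{\ell_{sys}(X)}\ \leq\ -\HolK(\grad \ell_{\alpha})(X)\ \leq\ \frac{c_1}{\ell_{sys}(X)}
\qquad\text{for all }X\in\sM_g.
\]

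Since $\ell_{sys}(X)>0$ and $c_1,c_2>0$, I would then take the supremum over $X\in\sM_g$; because $t\mapsto t$ is monotone and the bounds are two-sided with uniform constants, this yields
\[
c_2\cdot \sup_{X\in\sM_g}\frac{1}{\ell_{sys}(X)}\ \leq\ \sup_{X\in\sM_g}\bigl(-\HolK(\grad \ell_{\alpha})(X)\bigr)\ \leq\ c_1\cdot \sup_{X\in\sM_g}\frac{1}{\ell_{sys}(X)}.
\]
Now $\sup_{X\in\sM_g}\frac{1}{\ell_{sys}(X)}=\frac{1}{\inf_{X\in\sM_g}\ell_{sys}(X)}$ is not the relevant quantity; rather I would observe that the quantity in the displayed corollary is a \emph{supremum} of $\HolK$, i.e. $\sup_X\HolK=-\inf_X(-\HolK)$, so the correct chain is to take infima: $c_2\inf_X 1/\ell_{sys}(X)\le \inf_X(-\HolK)\le c_1\inf_X 1/\ell_{sys}(X)$, and $\inf_{X\in\sM_g}\frac{1}{\ell_{sys}(X)}=\frac{1}{\sup_{X\in\sM_g}\ell_{sys}(X)}$. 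By the Buser--Sarnak estimate \cite{BS94} recalled above, $\sup_{X\in\sM_g}\ell_{sys}(X)=\max_{X\in\sM_g}\ell_{sys}(X)\asymp \ln(g)$, so $\inf_{X\in\sM_g}\frac{1}{\ell_{sys}(X)}\asymp \frac{1}{\ln(g)}$. Combining,
\[
\sup_{X\in\sM_g}\HolK(\grad \ell_{\alpha})(X)=-\inf_{X\in\sM_g}\bigl(-\HolK(\grad \ell_{\alpha})(X)\bigr)\asymp \frac{-1}{\ln(g)},
\]
which is the claim.

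I do not expect a genuine obstacle here: the argument is formal once Theorem~\ref{ub-holk} is in hand. The only points requiring care are bookkeeping ones — making sure the comparability constants $c_1,c_2$ in Theorem~\ref{ub-holk} are genuinely uniform in $g$ (so that they survive the passage to the infimum unchanged), keeping track of the sign so that the \emph{supremum} of the negative quantity $\HolK$ corresponds to the \emph{infimum} of $-\HolK$ and hence to the \emph{supremum} of $\ell_{sys}$, and noting that although the surface realizing the lower Buser--Sarnak bound $\ell_{sys}\succ\ln g$ is a special one, this is irrelevant because the comparison $-\HolK\asymp 1/\ell_{sys}$ holds at every point of $\sM_g$.
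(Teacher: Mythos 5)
Your argument is correct and is exactly the route the paper intends: the corollary is stated there without proof as a ``direct consequence'' of Theorem~\ref{ub-holk} combined with the Buser--Sarnak fact $\max_{X\in\sM_g}\ell_{sys}(X)\asymp\ln(g)$, and your passage from the pointwise two-sided bound $-\HolK(\grad\ell_\alpha)(X)\asymp 1/\ell_{sys}(X)$ to $\inf_X\bigl(-\HolK\bigr)\asymp 1/\sup_X\ell_{sys}(X)$ is the whole content. The mid-proof self-correction (supremum versus infimum of $-\HolK$) lands on the right chain, so nothing is missing.
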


Another application of Theorem \ref{ub-lp} is to show that the minimum \wep holomorphic sectional curvature at any $X\in \sM_g$ is bounded from above by a uniform negative number. More precisely,
\bt [=Theorem \ref{s-holk-uup}]\label{holk-uup}
For any $X \in \sM_g$, 
$$\min_{\mu\in T_{X}\sM_g}\HolK(\mu) \prec -1<0.$$
\et

\bp
Let $X \in  \sM_g$ be arbitrary. We split the proof into two cases.

Case-1: $\ell_{sys}(X)=\ell_{\alpha}(X)<100$ for some $\alpha \subset X$. It follows by Theorem \ref{ub-holk} that 
\bear
\min_{\mu\in T_{X}\sM_g}\HolK(\mu) &\leq& \HolK(\grad \ell_{\alpha})(X) \\
&\asymp& \frac{-1}{\ell_{sys}(X)} \nonumber \\
&\prec&  -1 \nonumber
\eear
where we apply $\ell_{sys}(X)<100$ in the last equation.

Case-2:  $\ell_{sys}(X)\geq 100$. We use the $\mu_0(z)=\sum_{\gamma \in \Gamma} \frac{\overline{r'(z)^2}}{\rho(z)}\frac{d \overline{z}}{dz} \in T_X \sM_g$ instead of $\grad \ell_{\alpha}(X)$. Since $\lsys(X)\geq 100> 2\ln{(3+2\sqrt{2})}$, it follows by \cite[Theorem 6.1]{W17-wpc} or the proof of \cite[Theorem 1.1]{Wolf-W-1} that
\beqar \HolK(\mu_0)(X)\prec -1.\eeqar

\noindent Thus, we have if $\ell_{sys}(X)\geq 100$,
\bear
\min_{\mu\in T_{X}\sM_g}\HolK(P) \leq \HolK(\mu_0)(X)\prec -1.
\eear

Therefore the conclusion follows by the two cases above.
\ep

\br
Let $\tilde{Q}: \wedge^2T_X\sM_g \to \wedge^2T_X\sM_g$ be the real Riemannian curvature operator. This is an endomorphism of a $(3g-3)(6g-7)$ dimensional vector space. It was shown in \cite{W14-co} that $\tilde{Q}$ is non-positive definite. Moreover, it was shown in \cite[Theorem 1.2]{Wolf-W-1} that the $\ell^\infty$-norm $||\tilde{Q}||_{\ell^{\infty}}(X)$ of $\tilde{Q}$ at any $X\in \sM_g$ satisfies that \[||\tilde{Q}||_{\ell^{\infty}}(X) \geq \frac{1}{2\pi}.\] 
In particular, we have $||\tilde{Q}||_{\ell^{\infty}}(X)\succ 1$. The proof of this inequality highly depends on the \wep curvature operator formula developed in \cite{W14-co}. By definition and the negativity of \wep curvature one knows that $$||\tilde{Q}||_{\ell^{\infty}}(X) \geq \max_{\mu \in T_{X} \sM_g}|\HolK(\mu)|=-\min_{\mu \in T_{X} \sM_g}\HolK(\mu).$$ 
Thus, by Theorem \ref{holk-uup} we also get $$||\tilde{Q}||_{\ell^{\infty}}(X)\succ 1.$$
Although the uniform lower bound is not as explicit as $\frac{1}{2\pi}$ in \cite[Theorem 1.2]{Wolf-W-1}, as a direct consequence of Theorem \ref{holk-uup}, we give a completely different proof on this uniform lower bound $||\tilde{Q}||_{\ell^{\infty}}(X)\succ 1$ without using the \wep curvature operator formula.
\er

\section{Applications to the $L^p$ metric on $\sM_g$}\label{Lp-metric-inco}
In this section we study the $L^p$ $(1< p \leq \infty)$ metric on $\sM_g$ and its relation to Theorem \ref{s-ub-lp}.

Let $(X,\sigma(z)|dz|^2)\in \sM_g$ be a hyperbolic surface and $\phi \in Q(X)$ be a holomorphic quadratic differential on $X$. For any $p\in [1,\infty]$, we define
\bear \label{hqd-lp}
||\phi||_{L^p}:=||\frac{\overline{\phi}}{\sigma}||_p=\left(\int_X \left(\frac{|\phi(z)|}{\sigma(z)}\right)^p \cdot \dArea \right)^{\frac{1}{p}}.
\eear

The $L^p$ metric on $\sM_g$ is defined by \eqref{hqd-lp} via duality. More precisely,
\begin{definition}
For any $p\in [1,\infty]$, the \emph{$L^p$-metric} on $\sM_g$ is defined as follows: for any $\mu \in T_X \sM_g$ which is a harmonic Beltrami differential on $X$ we define
\bear\label{def-Lp}
||\mu||_{L^p}:=\sup_{\phi \in Q(X); \ ||\phi||_{L^p}=1}\Re \int_X \left(\frac{\phi}{\sigma}\cdot \mu \right)\cdot \dArea
\eear
We call that $(\sM_g, ||\cdot||_{L^p})$ is \emph{the moduli space endowed with the $L^p$-metric}, and denote by $\dist_p(\cdot, \cdot)$ the distance function on $(\sM_g, ||\cdot||_{L^p})$.    
\end{definition}

\noindent By definition we know that $(\sM_g, ||\cdot||_{L^1})$ is the \tec metric on $\sM_g$ which is a complete Finsler metric. And 
$(\sM_g, ||\cdot||_{L^2})$ is the \wep metric on $\sM_g$ which is an incomplete K\"ahler metric.

Let $q\in [1,\infty]$ be the conjugate number of $p$, i.e., $\frac{1}{q}+\frac{1}{p}=1$. For \eqref{def-Lp} first by the H\"older inequality we have
\bear \label{7-up-canc}
||\mu||_{L^p}\leq ||\mu||_{q}
\eear
where $||\mu||_q$ is defined in \eqref{def-mu-p}. On the other hand, by choosing $\phi=\frac{\overline{\mu} \cdot \sigma}{||\mu||_{p}}$ in \eqref{def-Lp} we have
\bear\label{7-lo-canc}
||\mu||_{L^p}\geq \frac{||\mu||_{2}^2}{||\mu||_{p}}= \frac{||\mu||_{\WP}^2}{||\mu||_p}.
\eear
In particular, $$||\mu||_{L^2}=||\mu||_2=||\mu||_{\WP}.$$

Now we are ready to state our result in this section.
\bt[=Theorem \ref{i-Lp-incomplete}]\label{Lp-incomplete}
Let $X \in \sM_g$ and $\alpha \subset X$ be a non-trivial simple loop with $\ell_\alpha(X)\leq L_0$ where $L_0>0$ is any given constant. Then for any $p\in (1, \infty]$,
\[\dist_p(X, \sM_g^{\alpha})\prec (\ell_{\alpha}(X))^{1-\frac{1}{p}}\]
where $\sM_g^{\alpha}$ is the stratum of $\sM_g$ whose pinching curve is $\alpha$. In particular, the space $(\sM_g, ||\cdot||_{L^p})$ $(1< p \leq \infty)$ is incomplete.
\et

\bp
Let $\grad \ell_\alpha(X)$ be the \wep gradient of the geodesic length function $\ell_\alpha(\cdot)$ at $X$. Now we consider the integral curve of the vector field $-\frac{\grad \ell_\alpha}{||\grad \ell_\alpha||_{L^p}}$. More precisely, let $c:[0, s) \to \sM_g$ be a curve, where $s>0$ is length of the maximal interval, satisfying
\ben
\item $c(0)=X$ and

\item $c'(t)=-\frac{\grad \ell_\alpha(c(t))}{||\grad \ell_\alpha(c(t))||_{L^p}}$.
\een 
Direct computations show that $t$ is an arc-length parameter of $c(\cdot)$ in $(\sM_g, ||\cdot||_{L^p})$ and $\ell_\alpha(c(t_1))<\ell_\alpha(c(t_2))$ for any $s> t_1> t_2 \geq 0$. Thus, as $t\to s$, $c(t)$ goes to $\sM_g^\alpha$. So we have
\[\lim_{t\to s}\ell_\alpha(c(t))=0.\]
Since $\ell_\alpha(c(t))$ is decreasing, $\ell_\alpha(c(t))\leq L_0$ for all $t\in [0,s)$. It follows by Part (2) of Theorem \ref{s-ub-lp} that for all $t\in [0,s)$ and $r\in [1,\infty]$,
\bear \label{lp-c(t)}
||\grad \ell_\alpha(c(t))||_r\asymp (\ell_\alpha(c(t)))^{\frac{1}{r}}
\eear
which together with \eqref{7-up-canc} and \eqref{7-lo-canc} implies that
\bear \label{7-Lp-1/p}
|\grad \ell_\alpha(c(t))||_{L^p}\asymp (\ell_\alpha(c(t)))^{\frac{1}{q}}
\eear
where $\frac{1}{q}+\frac{1}{p}=1$. Thus, we have for all $s> t_1> t_2 \geq 0$,
\beqar
|(\ell_\alpha(c(t_1)))^{\frac{1}{q}}-(\ell_\alpha(c(t_2)))^{\frac{1}{q}}|&=&|\int_{t_2}^{t_1}\frac{(\ell_\alpha(c(t)))^{\frac{1}{q}-1}}{q} \left<\grad \ell_\alpha(c(t)),c'(t) \right>_{wp}dt|\\
&=& \frac{1}{q} \int_{t_2}^{t_1}   \frac{||\grad \ell_\alpha(c(t))||^2_{\WP}}{(\ell_\alpha(c(t)))^{\frac{1}{p}}\cdot |\grad \ell_\alpha(c(t))||_{L^p}}dt\\
&\asymp& \int_{t_2}^{t_1} \frac{\ell_\alpha(c(t))}{(\ell_\alpha(c(t)))^{\frac{1}{p}}\cdot (\ell_\alpha(c(t)))^{\frac{1}{q}}}dt \\
&=& t_1-t_2 \\
&\geq& \dist_p(c(t_1),c(t_2))
\eeqar
where in the last inequality we apply $t$ is an arc-length parameter for $c(\cdot)$. We choose $t_2=0$ and let $t_1 \to s$ to get
\bear
\dist_p(X,  \sM_g^{\alpha})\leq \liminf_{t\to s}\dist_p(c(t),c(0))\prec (\ell_\alpha(X))^{\frac{1}{q}}
\eear
which completes the proof because $\frac{1}{q}=1-\frac{1}{p}$.
\ep

\bibliographystyle{amsalpha}
\bibliography{ref}

\end{document}